\newtheorem{thm}{Theorem}[section]
\newtheorem{lem}[thm]{Lemma}
\newtheorem{prop}[thm]{Proposition}
\theoremstyle{definition}
\newtheorem{rem}[thm]{Remark}
\numberwithin{equation}{section}
\newcommand{\N}{\mathbb{N}}
\newcommand{\Z}{\mathbb{Z}}
\newcommand{\R}{\mathbb{R}}
\newcommand{\lf}{\left\lfloor}
\newcommand{\rf}{\right\rfloor}
\renewcommand{\lvert}{\left\vert}
\renewcommand{\rvert}{\right\vert}
\begin{document}




\title{Construction of normal numbers via pseudo polynomial prime sequences}

\author[M. G. Madritsch]{Manfred G. Madritsch}
\address{
\noindent 1. Universit\'e de Lorraine, Institut Elie Cartan de Lorraine, UMR 7502, Vandoeuvre-l\`es-Nancy, F-54506, France;\newline
\noindent 2. CNRS, Institut Elie Cartan de Lorraine, UMR 7502, Vandoeuvre-l\`es-Nancy, F-54506, France}
\email{manfred.madritsch@univ-lorraine.fr}

\date{\today}

\begin{abstract}
  In the present paper we construct normal numbers in base $q$ by
  concatenating $q$-ary expansions of pseudo polynomials evaluated at
  the primes. This extends a recent result by Tichy and the
  author.
\end{abstract}

\subjclass[2010]{Primary 11N37; Secondary 11A63}

\keywords{normal number, pseudo-polynomial}

\maketitle

\section{Introduction}\label{sec:introduction}
Let $q\geq 2$ be a positive integer. Then every real $\theta\in[0,1)$
admits a unique expansion of the form
\[\theta=\sum_{k\geq1}a_kq^k\quad(a_k\in\{0,\ldots,q-1\})\]
called the $q$-ary expansion. We denote by
$\mathcal{N}(\theta,d_1\cdots d_\ell,N)$ the number of occurrences of the block
$d_1\cdots d_\ell$ amongst the first $N$ digits, \textit{i.e.}
\[\mathcal{N}(\theta,d_1\cdots d_\ell,N):=\#\{0\leq i< n\colon
a_{i+1}=d_1,\ldots,a_{i+\ell}=d_\ell\}.\] Then we call a number normal
of order $\ell$ in base $q$ if for each block of length $\ell$ the
frequency of occurrences tends to $q^{-\ell}$. As a qualitative
measure of the distance of a number from being normal we introduce for
integers $N$ and $\ell$ the discrepancy of $\theta$
by \[\mathcal{R}_{N,\ell}(\theta)=\sup_{d_1\ldots
  d_\ell}\lvert\frac{\mathcal{N}(\theta,d_1\cdots
  d_\ell,N)}{N}-q^{-k}\rvert,\] where the supremum is over all blocks
of length $\ell$. Then a number $\theta$ is normal to base $q$ if for
each $\ell\geq1$ we have that $\mathcal{R}_{N,\ell}(\theta)=o(1)$ for
$N\to\infty$. Furthermore we call a number absolutely normal if it is
normal in all bases $q\geq2$.

Borel \cite{borel1909:les_probabilites_denombrables} used a slightly
different, but equivalent (\textit{cf.} Chapter 4 of \cite{bugeaud2012:distribution_modulo_one}), definition of normality to show that almost
all real numbers are normal with respect to the Lebesgue
measure. Despite their omnipresence it is not known whether numbers
such as $\log2$, $\pi$, $e$ or $\sqrt2$ are normal to any base. The
first construction of a normal number is due to Champernowne
\cite{champernowne1933:construction_decimals_normal} who showed that
the number
\begin{align*}
0.1\,2\,3\,4\,5\,6\,7\,8\,9\,10\,11\,12\,13\,14\,15\,16\,17\,18\,19\,20\dots
\end{align*}
is normal in base $10$.

The construction of Champernowne laid the base for a class of
normal numbers which are of the form
\begin{gather*}
\sigma_q=\sigma_q(f)=
  0.\lf f(1)\rf_q\lf f(2)\rf_q\lf f(3)\rf_q \lf f(4)\rf_q \lf f(5)\rf_q \lf f(6)\rf_q \dots,
\end{gather*}
where $\lf\cdot\rf_q$ denotes the expansion in base $q$ of the integer
part. Davenport and Erd{\H o}s
\cite{davenport_erdoes1952:note_on_normal} showed that $\sigma(f)$ is
normal for $f$ being a polynomial such that $f(\N)\subset\N$. This
construction was extended by Schiffer
\cite{schiffer1986:discrepancy_normal_numbers} to polynomials with
rational coefficients. Furthermore he showed that for these
polynomials the discrepancy $\mathcal{R}_{N,\ell}(\sigma(f))\ll (\log
N)^{-1}$ and that this is best possible. These results where extended
by Nakai and Shiokawa
\cite{nakai_shiokawa1992:discrepancy_estimates_class} to polynomials
having real coefficients. Madritsch, Thuswaldner and Tichy
\cite{madritsch_thuswaldner_tichy2008:normality_numbers_generated}
considered transcendental entire functions of bounded logarithmic
order. Nakai and Shiokawa
\cite{nakai_shiokawa1990:class_normal_numbers} used
pseudo-polynomial functions, \textit{i.e.} these are function of the
form
\begin{gather}\label{mani:pseudopoly}
  f(x)=\alpha_0 x^{\beta_0}+\alpha_1x^{\beta_1}+\cdots+\alpha_dx^{\beta_d}
\end{gather}
with $\alpha_0,\beta_0,\alpha_1,\beta_1,\ldots,\alpha_d,\beta_d\in\R$,
$\alpha_0>0$, $\beta_0>\beta_1>\cdots>\beta_d>0$ and at least one
$\beta_i\not\in\Z$. Since we often only need the leading term we write
$\alpha=\alpha_0$ and $\beta=\beta_0$ for short. They were also able
to show that the discrepancy is $\mathcal{O}((\log N)^{-1})$. We refer
the interested reader to the books of Kuipers and Niederreiter
\cite{kuipers_niederreiter1974:uniform_distribution_sequences}, Drmota
and Tichy \cite{drmota_tichy1997:sequences_discrepancies_and} or
Bugeaud \cite{bugeaud2012:distribution_modulo_one} for a more complete
account on the construction of normal numbers.

The present method of construction by concatenating function values is in
strong connection with properties of $q$-additive functions. We call a
function $f$ strictly $q$-additive, if $f(0)=0$ and the function
operates only on the digits of the $q$-ary representation, i.e.,
\[
  f(n)=\sum_{h=0}^\ell f(d_h)\quad\text{ for }\quad n=\sum_{h=0}^\ell d_hq^h.
\]
A very simple example of a strictly $q$-additive function is the sum of digits
function $s_q$, defined by
\[
  s_q(n)=\sum_{h=0}^\ell d_h\quad\text{ for }\quad n=\sum_{h=0}^\ell d_hq^h.
\]

Refining the methods of Nakai and Shiokawa
\cite{nakai_shiokawa1990:class_normal_numbers} the author obtained the
following result.
\begin{thm}[{\cite[Theorem 1.1]{madritsch2012:summatory_function_q}}]
  Let $q\geq2$ be an integer and $f$ be a strictly $q$-additive
  function. If $p$ is a pseudo-polynomial as defined in
  (\ref{mani:pseudopoly}), then there exists $\eta>0$ such that
\begin{gather*}
  \sum_{n\leq N}f\left(\lf p(n)\rf\right)
  =\mu_fN\log_q(p(N))
  +NF\left(\log_q(p(N))\right)
  +\mathcal{O}\left(N^{1-\eta}\right),
\end{gather*}
where
\[
\mu_f=\frac1q\sum_{d=0}^{q-1}f(d)
\]
and $F$ is a $1$-periodic function depending only on $f$ and $p$.
\end{thm}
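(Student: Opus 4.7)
The plan is to exploit the strict $q$-additivity of $f$ to reduce the summatory function to a sum of digit counts, and then to estimate those counts using power-saving exponential sum bounds that are available for pseudo-polynomials. Write $L(n):=\lfloor\log_q p(n)\rfloor+1$ for the $q$-ary length of $\lfloor p(n)\rfloor$; strict $q$-additivity yields $f(\lfloor p(n)\rfloor)=\sum_{h=0}^{L(n)-1}f(d_h(n))$, where $d_h(n)$ denotes the $h$-th digit of $\lfloor p(n)\rfloor$. Exchanging the order of summation, it suffices to understand, for each position $h$ and each digit $d\in\{0,\ldots,q-1\}$, the count $\#\{n\leq N\colon L(n)>h,\ d_h(n)=d\}$.

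Second, I would convert the condition $d_h(n)=d$ into a statement about the fractional part $\{q^{-h-1}\lfloor p(n)\rfloor\}$ lying in a fixed interval of length $q^{-1}$. After absorbing the $\lfloor\,\cdot\,\rfloor$ into a controlled error, the Erd\H{o}s--Tur\'an inequality reduces the digit count to estimating exponential sums
\[S_h(k,N):=\sum_{n\leq N}e\!\left(kq^{-h-1}p(n)\right)\]
for nonzero integers $k$. Because $p$ has at least one non-integer exponent, $S_h(k,N)$ admits a power-saving estimate of Weyl / van der Corput / Vinogradov type, $S_h(k,N)\ll N^{1-\eta'}$, essentially the same estimate Nakai and Shiokawa used for a single monomial $\alpha n^\beta$, adapted so as to absorb the lower-order terms of $p$ in the differencing step. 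Summing these bounds, weighted by $1/k$, over the relevant range of $k$ and $h$ yields the total error $O(N^{1-\eta})$.

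Third, the main term is assembled by noting that, up to the power-saving error, each digit value is attained with frequency $q^{-1}$, so the contribution of position $h$ equals $\mu_f\cdot\#\{n\leq N\colon L(n)>h\}$. Summing over $h$ and using $L(n)=\log_q p(n)+O(1)$ produces the leading term $\mu_f N\log_q p(N)$, while the oscillating discrepancy between the integer counts $\#\{n\leq N\colon L(n)>h\}$ and their smooth counterparts contributes a $1$-periodic function of $\log_q p(N)$, which is precisely $NF(\log_q p(N))$. The $1$-periodicity reflects the elementary fact that $L$ jumps by one whenever $p(n)$ crosses a power of $q$.

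The principal obstacle is to secure $S_h(k,N)\ll N^{1-\eta'}$ \emph{uniformly} in $h$ across the full range $0\leq h\leq L(N)-1$, and then to ensure that the Erd\H{o}s--Tur\'an sum $\sum_k\frac{1}{k}\lvert S_h(k,N)\rvert$ summed over $h$ remains $O(N^{1-\eta})$. When $h$ is close to $L(N)$, the effective coefficient $kq^{-h-1}\alpha$ attached to the leading monomial becomes very small, so Weyl differencing loses strength; one must therefore separate the high-order digit positions (where the pseudo-polynomial oscillation is fully exploited) from the low-order ones (handled by elementary counting arguments), and balance the two regimes to extract a uniform saving $\eta>0$.
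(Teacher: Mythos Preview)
The paper does not contain a proof of this statement. The theorem is quoted verbatim from the author's earlier paper \cite{madritsch2012:summatory_function_q} (note the attribution \emph{[Theorem~1.1]} in the theorem header) and is included only to motivate the connection between normal-number constructions and $q$-additive functions; the present paper then proves the \emph{prime} analogues (Theorems~\ref{thm:normal} and~\ref{thm:summatoryfun}) via the central Theorem~\ref{mani:centralthm}. There is therefore nothing in this paper to compare your attempt against.

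That said, your sketch is broadly along the right lines and matches the architecture the paper uses for its own central theorem: replace the digit indicator by a truncated Fourier expansion (the paper uses Vinogradov's smoothed indicator, Lemma~\ref{vin:lem12}, rather than Erd\H{o}s--Tur\'an directly), and then estimate the resulting exponential sums $\sum_n e(\nu q^{-j}p(n))$ according to the size of $j$. One point your outline underplays: a pseudo-polynomial in the sense of \eqref{mani:pseudopoly} may have an \emph{integer} leading exponent, so the decomposition $p=g+h$ of \eqref{pseudo:poly:split} can have a genuine polynomial part $h$ of degree $k>\theta_r$. In that regime there is a third ``middle'' range of positions $j$ (compare Section~\ref{sec:expon-sum-estim}) where the leading monomial is polynomial and the non-integer part $g$ is subdominant; plain Weyl differencing on the full $p$ does not automatically give a power saving there, and one has to separate $g$ from $h$ by partial summation and control the polynomial piece by a Weyl/Vinogradov bound combined with Dirichlet approximation of the leading coefficient. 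Your dichotomy ``high-order versus low-order positions'' captures two of the three ranges but not this middle one, so as written the sketch would not cover the case $k>\theta_r$.
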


In the present paper, however, we are interested in a variant of
$\sigma_q(f)$ involving primes. As a first example, Champernowne
\cite{champernowne1933:construction_decimals_normal} conjectured and
later Copeland and Erd{\H o}s
\cite{copeland_erdoes1946:note_on_normal} proved that the number
\begin{align*}
0.2\,3\,5\,7\,11\,13\,17\,19\,23\,29\,31\,37\,41\,43\,47\,53\,59\,61\,67\dots
\end{align*}
is normal in base $10$. Similar to the construction above we want to
consider the number
\begin{gather*}
\tau_q=\tau_q(f)=0.\lf f(2)\rf_q \lf f(3)\rf_q \lf f(5)\rf_q \lf f(7)\rf_q \lf f(11)\rf_q \lf f(13)\rf_q \dots,
\end{gather*}
where the arguments of $f$ run through the sequence of primes.

Then the paper of Copeland and Erd{\H o}s corresponds to the function
$f(x)=x$. Nakai and Shiokawa
\cite{nakai_shiokawa1997:normality_numbers_generated} showed that the
discrepancy for polynomials having rational coefficients is
$\mathcal{O}((\log N)^{-1})$. Furthermore Madritsch, Thuswaldner and
Tichy
\cite{madritsch_thuswaldner_tichy2008:normality_numbers_generated}
showed, that transcendental entire functions of bounded logarithmic
order yield normal numbers. Finally in a recent paper Madritsch and
Tichy \cite{madritsch_tichy2013:construction_normal_numbers}
considered pseudo-polynomials of the special form $\alpha x^\beta$
with $\alpha>0$, $\beta>1$ and $\beta\not\in\Z$.

The aim of the present paper is to extend this last construction to
arbitrary pseudo-polynomials. Our first main result is the following
\begin{thm}\label{thm:normal}
Let $f$ be a pseudo-polynomial as in (\ref{mani:pseudopoly}). Then
\[
\mathcal{R}_N(\tau_q(f))\ll(\log N)^{-1}.
\]
\end{thm}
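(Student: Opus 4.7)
My plan is to follow the Nakai--Shiokawa strategy, as refined by Madritsch--Tichy for the monomial case $\alpha x^\beta$, and reduce the discrepancy bound to an exponential sum estimate over primes. The novelty is to handle the lower-order pseudo-polynomial terms in a way that does not spoil the cancellation produced by the non-integer leading exponent $\beta_0$.

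\textbf{Step 1: Digit counting setup.} Fix a block $d_1\cdots d_\ell$ and choose $X=X(N)$ so that the first $N$ digits of $\tau_q(f)$ are exactly those produced by concatenating $\lf f(p)\rf_q$ for primes $p\leq X$, up to a possibly incomplete last block. By the Prime Number Theorem together with $\log_q\lf f(p)\rf\sim\beta\log_q p$ one gets $X\asymp N/\beta$ (up to a logarithmic factor), hence $\log X\asymp\log N$. I decompose the occurrences of the block into three groups: (i) those contained in a single $\lf f(p)\rf_q$, (ii) those overlapping two consecutive blocks, and (iii) those in the partial terminal block. Groups (ii) and (iii) each contribute $O(\pi(X))=O(N/\log N)$, which is already compatible with the claimed discrepancy.

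\textbf{Step 2: Reduction to exponential sums.} For group (i), an occurrence starting at digital position $j$ of $\lf f(p)\rf_q$ is equivalent to $\{q^{-j}f(p)\}\in I$ for a fixed subinterval $I\subset[0,1)$ of length $q^{-\ell}$. Summing over $p\leq X$ and applying the Erd{\H o}s--Tur\'an inequality for each $j$ reduces matters to bounding
\[
S_j(h)=\sum_{p\leq X}\exp\bigl(2\pi i\,h\,q^{-j}f(p)\bigr),\qquad 1\leq \lvert h\rvert\leq H,
\]
uniformly for $0\leq j\leq \beta\log_q X$ and $H$ a small power of $\log X$. A saving of the form $S_j(h)\ll X(\log X)^{-2}$ in this range will suffice to produce the final $(\log N)^{-1}$ bound after summation over $j$ and $h$.

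\textbf{Step 3: Bounding $S_j(h)$ --- the main obstacle.} Setting $\phi(x)=hq^{-j}f(x)$, I would attack $S_j(h)$ by Vaughan's identity, splitting it into Type I sums $\sum_{mn\leq X}a_m\exp(2\pi i\phi(mn))$ and Type II sums $\sum_m\sum_n a_m b_n\exp(2\pi i\phi(mn))$, to each of which I apply repeated $A$- and $B$-processes of van der Corput, exactly as in Nakai--Shiokawa and Madritsch--Tichy. The key analytic input is a lower bound of the shape $\lvert\phi^{(k)}(x)\rvert\asymp \lvert h\rvert q^{-j}x^{\beta_0-k}$ on a dyadic interval $[X/2,X]$, with $k$ chosen so that $x^{\beta_0-k}$ genuinely reflects the smallest non-integer exponent in the pseudo-polynomial. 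The central difficulty is that the $\beta_i$'s mix integer and non-integer values, so one must choose $k$ large enough to kill the integer-exponent terms while still dominating the remaining non-integer ones on $[X/2,X]$; this is where the extension from the pure monomial case requires care. An additional technical point is the range of admissible $j$: when $q^{-j}$ is very small the derivative becomes too small for van der Corput to be effective, so the high-order digits (the largest $j$) must be handled by a separate Weyl-type equidistribution argument for $hq^{-j}\alpha_0 x^{\beta_0}\bmod 1$.

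\textbf{Step 4: Assembly.} Inserting the bound $S_j(h)\ll X(\log X)^{-2}$ into the Erd{\H o}s--Tur\'an step of Step 2, summing over $0\leq j\leq\beta\log_q X$ and $1\leq \lvert h\rvert\leq H$, and combining with the boundary contribution from Step 1, yields $\mathcal{R}_N(\tau_q(f))\ll (\log N)^{-1}$, as claimed.
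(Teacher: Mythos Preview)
Your overall architecture matches the paper's: reduce to counting block occurrences in $\lfloor f(p)\rfloor$ for $p\leq P$ with $P\asymp N$, approximate the indicator by a truncated Fourier series (the paper uses Vinogradov's ``little glasses'' rather than Erd\H os--Tur\'an, but this is cosmetic), and bound the resulting sums $S(P,j,\nu)=\sum_{p\leq P}e(\nu q^{-j}f(p))$ in different ranges of $j$. Your two-range split (Vaughan + van der Corput for most $j$, and a separate treatment for the largest $j$) is also what the paper does \emph{when the leading exponent $\beta_0$ is non-integer}.

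The gap is the case $\beta_0\in\Z$, which the definition of pseudo-polynomial permits (only \emph{some} $\beta_i$ must be irrational). Write $f=g+h$ with $h\in\R[X]$ of degree $k=\beta_0$ and $g(x)=\sum\alpha_j x^{\theta_j}$, $\theta_r$ the largest non-integer exponent; then $\theta_r<k$. Your plan is to differentiate enough times to annihilate $h$ and pick up $\lvert\phi^{(m)}(x)\rvert\asymp \lvert h\rvert q^{-j}x^{\theta_r-m}$ from $g$. But for $j$ in the ``middle'' range $P^{\theta_r-\rho}<q^j\leq P^{k-1+\rho}$ the quantity $F:=\nu q^{-j}X^{\theta_r}$ is only $O(P^{2\rho})$, and van der Corput--type bounds (e.g.\ the BKMST lemma the paper uses) have a term of size $X(F^{-1}\log^k X)^{1/K}$ which gives no saving when $F$ is this small. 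In other words, once $q^{-j}g$ is nearly constant, differentiating past the polynomial part leaves nothing to oscillate, while differentiating fewer times leaves a genuine polynomial phase for which derivative tests alone are useless over primes.

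The paper closes this gap with a genuinely different tool: on the middle range it removes $g$ by partial summation (the variation of $e(\nu q^{-j}g(n))$ over a suitably short interval is $O(1)$), reducing to $\sum_{p}e(\nu q^{-j}h(p))$ with $h$ a polynomial, and then invokes Harman's bound for $\sum_{p\leq X}e(h(p))$, which is driven by the Diophantine approximation of the leading coefficient $\nu\alpha_k q^{-j}$ rather than by derivative size. A short case analysis shows that the denominator in the approximation must exceed $H^{\rho}$ in this range, yielding a power saving. Your sketch has no analogue of this step, so as written it does not cover all pseudo-polynomials in the stated class.
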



In our second main result we use the connection of this construction
of normal numbers with the arithmetic mean of $q$-additive functions
as described above. Known results are due to Shiokawa
\cite{shiokawa1974:sum_digits_prime} and Madritsch and Tichy
\cite{madritsch_tichy2013:construction_normal_numbers}. Similar
results concerning the moments of the sum of digits function over
primes have been established by K\'atai
\cite{katai1977:sum_digits_primes}.

Let $\pi(x)$ stand for the number of primes less than or equal to
$x$. Then adapting these ideas to our method we obtain the following
\begin{thm}\label{thm:summatoryfun}
Let $f$ be a pseudo-polynomial as in (\ref{mani:pseudopoly}). Then 
\[
\sum_{p\leq P}s_q(\lf f(p)\rf)=\frac{q-1}2\pi(P)\log_qP^\beta+\mathcal{O}(\pi(P)),
\]
where the sum runs over the primes and the implicit $\mathcal{O}$-constant may
depend on $q$ and $\beta$.
\end{thm}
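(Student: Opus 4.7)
The plan is to deduce Theorem \ref{thm:summatoryfun} from Theorem \ref{thm:normal} together with the Prime Number Theorem, via the familiar device of reading off the digit sum as a weighted count of digits in the concatenation $\tau_q(f)$.

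\textbf{Step 1 (block-length bookkeeping).} Set $L(p):=\lfloor\log_q f(p)\rfloor+1$, so that $L(p)$ is the number of base-$q$ digits of $\lf f(p)\rf$, and let $M:=\sum_{p\le P} L(p)$ be the total number of digits that the primes up to $P$ contribute to $\tau_q(f)$. Since the leading term $\alpha p^{\beta}$ dominates the pseudo-polynomial $f$, one has $\log_q f(p)=\beta\log_q p+\mathcal{O}(1)$. The Prime Number Theorem in the form $\vartheta(P)=\pi(P)\log P+\mathcal{O}(P/\log P)$ then yields
\[
M=\beta\sum_{p\le P}\log_q p+\mathcal{O}(\pi(P))=\beta\,\pi(P)\log_q P+\mathcal{O}(\pi(P))=\pi(P)\log_q P^{\beta}+\mathcal{O}(\pi(P)).
\]

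\textbf{Step 2 (transferring via Theorem \ref{thm:normal}).} Since the digit stream of $\tau_q(f)$ is partitioned into the blocks $\lf f(2)\rf_q,\lf f(3)\rf_q,\ldots$ indexed by the primes, cutting after the block for $P$ produces precisely the first $M$ digits; no block is broken in the middle. Writing $\mathcal{N}_d(M)$ for the number of digits equal to $d$ among the first $M$ digits of $\tau_q(f)$, Theorem \ref{thm:normal} (specialized to $\ell=1$) yields for every $d\in\{0,\ldots,q-1\}$ that $\mathcal{N}_d(M)=M/q+\mathcal{O}(M/\log M)$. Because $M\asymp\pi(P)\log P$, we have $\log M=\log P+\mathcal{O}(1)$ and hence $M/\log M=\mathcal{O}(\pi(P))$. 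Grouping the digits by value,
\[
\sum_{p\le P} s_q(\lf f(p)\rf)=\sum_{d=0}^{q-1} d\,\mathcal{N}_d(M)=\frac{M}{q}\cdot\frac{q(q-1)}{2}+\mathcal{O}(\pi(P))=\frac{q-1}{2}M+\mathcal{O}(\pi(P)),
\]
and substituting the expression for $M$ from Step 1 finishes the proof.

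The entire substantive work is contained in Theorem \ref{thm:normal}; the only technical point to check is that the discrepancy error $\mathcal{O}(M/\log M)$ really absorbs into the claimed $\mathcal{O}(\pi(P))$, which is precisely the content of the asymptotic $M\asymp\pi(P)\log P$. Were Theorem \ref{thm:normal} replaced by bare equidistribution without a quantitative discrepancy bound, the same argument would still produce the main term, but only with the weaker error estimate $o(\pi(P)\log P)$.
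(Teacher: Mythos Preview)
Your proof is correct and follows essentially the same idea as the paper: write $s_q(n)=\sum_{d=0}^{q-1}d\cdot(\text{number of occurrences of }d)$ and feed in the single-digit equidistribution result. The only cosmetic difference is that the paper invokes the central Theorem~\ref{mani:centralthm} directly (which already packages the main term $q^{-1}\pi(P)\log_q P^\beta$), whereas you route through Theorem~\ref{thm:normal} at $N=M$ together with the PNT computation of $M$; since Theorem~\ref{thm:normal} is itself derived from Theorem~\ref{mani:centralthm} via exactly that computation, the two arguments coincide.
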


\begin{rem}
With simple modifications Theorem \ref{thm:summatoryfun} can be extended to
completely $q$-additive functions replacing $s_q$.
\end{rem}


The proof of the two theorems is divided into four parts. In the
following section we rewrite both statements in order to obtain as a
common base the central theorem -- Theorem \ref{mani:centralthm}. In
Section~\ref{sec:proof-prop-refm1} we start with the proof of this
central theorem by using an indicator function and its Fourier
series. These series contain exponential sums which we treat by
different methods (with respect to the position in the expansion) in
Section \ref{sec:expon-sum-estim}. Finally, in
Section~\ref{sec:proof-prop-refm2} we put the estimates together in
order to proof the central theorem and therefore our two statements.

\section{Preliminaries}\label{sec:preliminaries}

Throughout the rest $p$ will always denote a prime. The implicit
constant of $\ll$ and $\mathcal{O}$ may depend on the
pseudo-polynomial $f$ and on the parameter
$\varepsilon>0$. Furthermore we fix a block $d_1\cdots d_\ell$ of
length $\ell$ and $N$, the number of digits we consider.

In the first step we want to know in the
expansion of which prime the $N$-th digit occurs. This can be seen as
the translation from the digital world to the world of blocks. To this
end let $\ell(m)$ denote the length of the $q$-ary
expansion of an integer $m$. Then we define an integer $P$ by
\begin{gather*}
\sum_{p\leq P-1}\ell\left(\lfloor f(p)\rfloor\right) <N\leq
\sum_{p\leq P}\ell\left(\lfloor f(p)\rfloor\right),
\end{gather*}
where the sum runs over all primes. Thus we get the following relation
between $N$ and $P$
\begin{equation}\label{mani:NtoP}
\begin{split}
N&=\sum_{p\leq P}\ell(\lf f(p)\rf)+\mathcal{O}(\pi(P))+\mathcal{O}(\beta \log_q(P))\\
&=\frac{\beta}{\log q}P+\mathcal{O}\left(\frac{P}{\log P}\right).
\end{split}\end{equation}
Here we have used the prime number theorem in the form (\textit{cf.}
\cite[Th\'eor\`eme 4.1]{tenenbaum1995:introduction_la_theorie}) 
\begin{gather}\label{pnt}
  \pi(x)=\mathrm{Li}\, x+\mathcal{O}\left(\frac x{(\log x)^G}\right),
\end{gather}
where $G$ is an arbitrary positive constant and
\[
  \mathrm{Li}\,x=\int_2^x\frac{\mathrm{d}t}{\log t}.
\]

Now we show that we may neglect the occurrences of the block
$d_1\cdots d_\ell$ between two expansions. We write
$\mathcal{N}(f(p))$ for the number of occurrences of this block in the
$q$-ary expansion of $\lfloor f(p)\rfloor$. Then \eqref{mani:NtoP}
implies that
\begin{gather}\label{mani:Ntrunc}
  \lvert\mathcal{N}(\tau_q(f);d_1\cdots d_\ell;N)-\sum_{p\leq
    P}\mathcal{N}(f(p))\rvert\ll\frac N{\log N}.
\end{gather}

In the next step we use the polynomial-like behavior of $f$. In
particular, we collect all the values having the same length of
expansion. Let $j_0$ be a sufficiently large integer. Then for
each integer $j\geq j_0$ there exists a $P_j$ such that
\[
  q^{j-2}\leq f(P_j)<q^{j-1}\leq f(P_j+1)<q^j
\]
with
\[
  P_j\asymp q^{\frac j\beta}.
\]
Furthermore we set $J$ to be the greatest length of the $q$-ary
expansions of $f(p)$ over the primes $p\leq P$, i.e.,
\begin{gather*}
J:=\max_{p\leq P}\ell(\lfloor f(p)\rfloor)=\log_q(f(P))+\mathcal{O}(1)\asymp\log
P.
\end{gather*}

Now we show that we may suppose that each expansion has the same
length (which we reach by adding leading zeroes). For $P_{j-1}<p\leq
P_j$ we may write $f(p)$ in $q$-ary expansion, i.e.,
\begin{gather}\label{mani:expansionoffp}
f(p)=b_{j-1}q^{j-1}+b_{j-2}q^{j-2}+\dots+b_{1}q+b_{0}+b_{-1}q^{-1}+\dots.
\end{gather}
Then we denote by $\mathcal{N}^*(f(p))$ the number of occurrences of the block
$d_1\cdots d_\ell$ in the string $0\cdots0b_{j-1}b_{j-2}\cdots b_1b_0$, where we
filled up the expansion with leading zeroes such that it has length $J$. The error of
doing so can be estimated by 
\begin{align*}
0&\leq\sum_{p\leq P}\mathcal{N}^*(f(p))-\sum_{p\leq P}\mathcal{N}(f(p))\\
&\leq\sum_{j=j_0+1}^{J-1}(J-j)\left(\pi(P_{j+1})-\pi(P_{j})\right)+\mathcal{O}(1)\\
&\leq\sum_{j=j_0+2}^{J}\pi(P_{j})+\mathcal{O}(1)\ll\sum_{j=j_0+2}^{J}\frac{q^{j/\beta}}j
\ll\frac P{\log P}\ll\frac N{\log N}.
\end{align*}

In the following three sections we will estimate this sum of indicator
functions $\mathcal{N}^*$ in order to prove the following theorem.
\begin{thm}\label{mani:centralthm}
Let $f$ be a pseudo polynomial as in \eqref{mani:pseudopoly}. Then
\begin{gather}\label{mani:centralthm:statement}
\sum_{p\leq
  P}\mathcal{N}^*\left(\lf f(p)\rf\right)=q^{-\ell}\pi(P)\log_qP^\beta+\mathcal{O}\left(\frac{P}{\log
  P}\right)
\end{gather}
\end{thm}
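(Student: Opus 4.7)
The plan is to detect each occurrence of the block $d_1\cdots d_\ell$ by a Fourier series in $f(p)q^{-k}$, reducing the problem to bounds on exponential sums over primes, and then to split the range $\ell\leq k\leq J$ into zones on which different estimates apply. Writing $\Psi$ for a $1$-periodic indicator (or a Vaaler approximation thereof) of the length-$q^{-\ell}$ interval corresponding to the fixed digit pattern, one has
\[
\mathcal{N}^*(\lfloor f(p)\rfloor)=\sum_{k=\ell}^{J}\Psi\bigl(f(p)q^{-k}\bigr)+\mathcal{O}(1),
\]
the $\mathcal{O}(1)$ absorbing the distinction between $f(p)$ and $\lfloor f(p)\rfloor$ together with $\ell$ boundary digits; summed over $p\leq P$ this error is $\mathcal{O}(\pi(P))=\mathcal{O}(P/\log P)$. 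Expanding $\Psi$ and isolating the zeroth Fourier coefficient produces a main term $q^{-\ell}\pi(P)J$ which, after inserting $J=\log_q f(P)+\mathcal{O}(1)=\log_q P^\beta+\mathcal{O}(1)$, matches the claim up to admissible error.

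All the substance therefore goes into bounding the Fourier tail
\[
\sum_{0<|h|\leq H}c_h\sum_{k=\ell}^{J}\exp\bigl(-2\pi i h D_\ell q^{-\ell}\bigr)\sum_{p\leq P}\exp\bigl(2\pi i h f(p)q^{-k}\bigr),
\]
with $|c_h|\ll\min(q^{-\ell},|h|^{-1})$ and $H$ a suitable truncation. I would split the $k$-range into three zones. At the top of the expansion (small $k$), the phase derivative $hf'(p)q^{-k}$ is sizeable, so a Van der Corput process combined with Vaughan's identity (Vinogradov's method for exponential sums over primes) should yield a power-saving bound. At the bottom of the expansion ($k$ close to $J$), the full phase $hf(p)q^{-k}$ is small, and the argument exploits non-integrality of $\beta_0$, mirroring the treatment of the monomial $\alpha x^\beta$ in Madritsch--Tichy, to force cancellation through Diophantine approximation of $\beta_0$. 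The intermediate zone is handled by interpolating between the two regimes, again invoking a Van der Corput iteration calibrated to the leading term $\alpha_0 x^{\beta_0}$ of the pseudo-polynomial.

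Summed against $|c_h|\ll|h|^{-1}$ and over $k\in[\ell,J]$, the tail contribution should come out to $\mathcal{O}(P/\log P)$, which together with the main term proves \eqref{mani:centralthm:statement}. The principal difficulty, and essentially the only non-routine step, is obtaining uniform, non-trivial bounds for the exponential sums $\sum_{p\leq P}\exp(2\pi i h f(p)q^{-k})$ across the full range of $k$, and in particular in the transitional middle zone where neither the ``$k$ small'' nor the ``$k$ large'' arguments apply cleanly. Here the pseudo-polynomial hypothesis that some $\beta_i\notin\Z$ is essential, since it rules out the arithmetic resonance between $f$ and the rational scaling $q^{-k}$ that would otherwise obstruct Weyl- or Vinogradov-type cancellation.
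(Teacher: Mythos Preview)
Your overall scaffolding is right and matches the paper: Fourier-expand the indicator, extract the main term $q^{-\ell}\pi(P)J$, and reduce the tail to exponential sums $\sum_{p\le P}e(\nu f(p)q^{-j})$ bounded in zones according to the size of $j$. (Your ``top'' and ``bottom'' labels are inverted, incidentally: small $j$ picks out the \emph{least} significant digits, where the phase is large; $j$ near $J$ picks out the most significant digits, where the phase is small.) For the least-significant zone the paper does what you propose---Vaughan's identity plus a van-der-Corput-type lemma---and for the most-significant zone it actually uses something simpler than what you sketch: Riemann--Stieltjes integration against $d\pi(t)$ followed by the $k$-th derivative test, with no Diophantine input at all.

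There is, however, a genuine gap. You repeatedly assume the leading exponent $\beta_0$ is non-integer (``exploits non-integrality of $\beta_0$'', ``calibrated to the leading term $\alpha_0x^{\beta_0}$''), but the definition \eqref{mani:pseudopoly} only requires that \emph{some} $\beta_i\notin\Z$. When $\beta_0\in\Z$ the dominant part of $f$ is a genuine polynomial of degree $k=\beta_0$, and in the intermediate range of $j$ neither a van-der-Corput iteration nor the Madritsch--Tichy monomial argument is available: the former runs out of derivatives after $k$ steps, and the latter needed $\beta_0\notin\Z$. This is exactly the resonance you warn about in your final sentence, and it is not resolved by anything in your outline. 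The paper handles it by writing $f=g+h$ with $h\in\R[X]$ of degree $k$ and $g$ the purely non-integer-exponent part (leading exponent $\theta_r<k$); in the middle zone $P^{\theta_r-\rho}<q^j\le P^{k-1+\rho}$ it uses Abel summation on short intervals to strip off the now lower-order factor $e(\nu g(n)q^{-j})$, and then applies Harman's estimate for $\sum_p e(h(p))$ together with a case analysis on the Dirichlet approximation of the leading coefficient $\nu\alpha_kq^{-j}$, showing that the size constraints on $j$ force the denominator to be large. This middle-range proposition is the substantive extension over the monomial case and is missing from your plan.
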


Using this theorem we can simply deduce our two main results.

\begin{proof}[Proof of Theorem \ref{thm:normal}]
We insert \eqref{mani:centralthm:statement} into \eqref{mani:Ntrunc}
and get the desired result.
\end{proof}

\begin{proof}[Proof of Theorem \ref{thm:summatoryfun}]
For this proof we have to rewrite the statement. In particular, we use that the
sum of digits function counts the number of $1$s, $2$s, etc. and
assigns weights to them, i.e.,
\[
s_q(n)=\sum_{d=0}^{q-1}d\cdot\mathcal{N}(n;d).
\]
Thus
\begin{align*}
\sum_{p\leq P}s_q(\lf p^\beta\rf)
&=\sum_{p\leq P}\sum_{d=0}^{q-1}d\cdot\mathcal{N}(p^\beta)
=\sum_{p\leq
  P}\sum_{d=0}^{q-1}d\cdot\mathcal{N}^*(p^\beta)+\mathcal{O}\left(\frac{P}{\log
    P}\right)\\
&=\frac{q-1}2\pi(P)\log_q(P^\beta)+\mathcal{O}\left(\frac{P}{\log
    P}\right)
\end{align*}
and the theorem follows.
\end{proof}


In the following sections we will prove Theorem \ref{mani:centralthm}
in several steps. First we use the ``method of little glasses'' in
order to approximate the indicator function by a Fourier series having
smooth coefficients. Then we will apply different methods (depending
on the position in the expansion) for the estimation of the
exponential sums that appear in the Fourier series. Finally we put
everything together and get the desired estimate.

\section{Proof of Theorem \ref{mani:centralthm}, Part I}\label{sec:proof-prop-refm1}
We want to ease notation by splitting the pseudo-polynomial $f$ into a polynomial and the rest. Then
there exists a unique decomposition of the following form:
\begin{gather}\label{pseudo:poly:split}
f(x)=g(x)+h(x)
\end{gather}
where $h\in\R[X]$ is a polynomial of degree $k$ (where we set
$k=0$ if $h$ is the zero polynomial) and
$$g(x)=\sum_{j=1}^r\alpha_jx^{\theta_j}$$
with $r\geq1$, $\alpha_r\neq0$, $\alpha_j$ real, $0<\theta_1<\cdots<\theta_r$
and $\theta_j\not\in\Z$ for $1\leq j\leq r$.

Let $\gamma$ and $\rho$ be two parameter which we will frequently
use in the sequel. We suppose that
\begin{gather*}
0<\gamma<\rho<\min\left(\frac1{4(k+1)},\frac{\theta_r}{2}\right).
\end{gather*}

The aim of this section is to calculate the Fourier transform of
$\mathcal{N}^*$. In order to count the occurrences of the block
$d_1\cdots d_\ell$ in the $q$-ary expansion of $\lfloor f(p) \rfloor$
($2\le p \le P$) we define the indicator function
\begin{align*}
\mathcal{I}(t)=\begin{cases}
  1, &\text{if }\sum_{i=1}^\ell d_iq^{-i}\leq t-\lfloor t\rfloor
     <\sum_{i=1}^\ell d_iq^{-i}+q^{-\ell};\\
  0, &\text{otherwise;}
     \end{cases}
\end{align*}
which is a $1$-periodic function. Indeed, we have
\begin{gather}\label{position}
\mathcal{I}(q^{-j}f(p)) = 1 \Longleftrightarrow d_1\cdots d_\ell =
b_{j-1}\cdots b_{j-\ell},
\end{gather}
where $f(p)$ has an expansion as in (\ref{mani:expansionoffp}). Thus
we may write our block counting function as follows
\begin{gather}\label{mani:NthetatoNstar}
\mathcal{N}^*(f(p))=\sum_{j=\ell}^J\mathcal{I}\left(q^{-j}f(p)\right).
\end{gather}


In the following we will use Vinogradov's ``method of little glasses''
(\textit{cf.} \cite{vinogradov2004:method_trigonometrical_sums}). We want to approximate
$\mathcal{I}$ from above and from below by two $1$-periodic functions
having small Fourier coefficients. To this end we will use the
following
\begin{lem}[{\cite[Lemma
    12]{vinogradov2004:method_trigonometrical_sums}}]\label{vin:lem12}
Let $\alpha$, $\beta$, $\Delta$ be real numbers satisfying
\begin{gather*}
0<\Delta<\frac12,\quad\Delta\leq\beta-\alpha\leq1-\Delta.
\end{gather*}
Then there exists a periodic function $\psi(x)$ with period $1$,
satisfying
\begin{enumerate}
\item $\psi(x)=1$ in the interval $\alpha+\frac12\Delta\leq x
  \leq\beta-\frac12\Delta$,
\item $\psi(x)=0$ in the interval $\beta+\frac12\Delta\leq x
  \leq1+\alpha-\frac12\Delta$,
\item $0\leq\psi(x)\leq1$ in the remainder of the interval
  $\alpha-\frac12\Delta\leq x\leq1+\alpha-\frac12\Delta$,
\item $\psi(x)$ has a Fourier series expansion of the form
  $$
  \psi(x)=\beta-\alpha+\sum_{\substack{\nu=-\infty\\\nu\neq0}}^\infty
   A(\nu) e(\nu x),
  $$
  where
  \begin{gather}\label{mani:A}
  \lvert A(\nu)\rvert \ll \min \left( \frac 1\nu,
  \beta-\alpha,\frac{1}{\nu^2\Delta} \right).
  \end{gather}
\end{enumerate}
\end{lem}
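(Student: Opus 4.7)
The plan is to construct $\psi$ as the convolution on the circle $\R/\Z$ of the indicator function $\chi$ of $[\alpha,\beta]$ with a non-negative smoothing kernel $K_\Delta$ of total mass one supported in $[-\Delta/2,\Delta/2]$. The simplest choice is the rectangular kernel $K_\Delta(t)=\Delta^{-1}$ on $[-\Delta/2,\Delta/2]$ and $0$ elsewhere (extended $1$-periodically). Set
\[
\psi(x)=(\chi\ast K_\Delta)(x)=\int_{-1/2}^{1/2}\chi(x-t)\,K_\Delta(t)\,dt.
\]
The hypotheses $\Delta<1/2$ and $\Delta\le\beta-\alpha\le 1-\Delta$ guarantee that the transition zone of width $\Delta$ around $\alpha$ and the one around $\beta$ are disjoint on the circle, so the periodic convolution reduces to the non-periodic picture.

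Properties (1)--(3) follow by inspection. If $x\in[\alpha+\Delta/2,\beta-\Delta/2]$ then $x-t\in[\alpha,\beta]$ for every $t\in[-\Delta/2,\Delta/2]$, so $\chi(x-t)\equiv 1$ and $\psi(x)=\int K_\Delta=1$. If $x\in[\beta+\Delta/2,1+\alpha-\Delta/2]$ then $x-t\notin[\alpha,\beta]\pmod 1$ for every such $t$, whence $\psi(x)=0$. Since $K_\Delta\ge 0$, $\int K_\Delta=1$ and $0\le\chi\le 1$, the intermediate bound $0\le\psi\le 1$ is automatic.

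For the Fourier bounds I would use that $A(\nu)=\hat\chi(\nu)\,\hat K_\Delta(\nu)$, together with the direct computations
\[
\hat\chi(0)=\beta-\alpha,\qquad
\hat\chi(\nu)=\frac{e(-\nu\alpha)-e(-\nu\beta)}{2\pi i\nu}\quad(\nu\neq 0),\qquad
\hat K_\Delta(\nu)=\frac{\sin(\pi\nu\Delta)}{\pi\nu\Delta},
\]
so $|\hat\chi(\nu)|\le\min(\beta-\alpha,\,1/(\pi|\nu|))$ and $|\hat K_\Delta(\nu)|\le\min(1,\,1/(\pi|\nu|\Delta))$. Combining these three pairs of trivial/nontrivial bounds immediately produces the claimed estimate $|A(\nu)|\ll\min(|\nu|^{-1},\beta-\alpha,(\nu^2\Delta)^{-1})$: use $|\hat K_\Delta|\le 1$ for the first, $|\hat\chi|\le\beta-\alpha$ for the second, and the nontrivial decay of both factors for the third.

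There is no real obstacle here; the only subtlety is the torus bookkeeping. One must check that the condition $\beta-\alpha\le 1-\Delta$ genuinely prevents the two transition zones from overlapping across the identification $0\sim 1$, since otherwise the simple case analysis used to verify (1) and (2) fails. Once that is confirmed, a one-line convolution argument produces all four conclusions simultaneously.
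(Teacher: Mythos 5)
Your construction is correct, and it is worth noting that the paper itself offers no proof of this statement: it is quoted verbatim as Lemma 12 of Vinogradov's book, so any comparison is with Vinogradov's construction rather than with an argument in the paper. Vinogradov builds $\psi$ by an $r$-fold iterated smoothing of the indicator function (the ``little glasses''), which yields the stronger coefficient bound $\lvert A(\nu)\rvert\le\min\bigl(\beta-\alpha,\tfrac1{\pi\lvert\nu\rvert},\tfrac1{\pi\lvert\nu\rvert}\bigl(\tfrac r{\pi\lvert\nu\rvert\Delta}\bigr)^r\bigr)$; the estimate \eqref{mani:A} used in the paper is essentially the case $r=1$, and your single convolution with the rectangular kernel $K_\Delta$ produces exactly that, so your more elementary route fully suffices here. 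The verification of (1)--(3) and the bounds $\lvert\hat\chi(\nu)\rvert\le\min(\beta-\alpha,\tfrac1{\pi\lvert\nu\rvert})$, $\lvert\hat K_\Delta(\nu)\rvert\le\min(1,\tfrac1{\pi\lvert\nu\rvert\Delta})$ are all accurate. Two small points you leave implicit and should state: at the endpoints of the interval in (2) one has $x-t\in\{\alpha,\beta\}$ for a measure-zero set of $t$, which does not affect the convolution integral, so $\psi$ still vanishes there; and the identity $\psi(x)=\beta-\alpha+\sum_{\nu\neq0}A(\nu)e(\nu x)$ holds pointwise because the bound $A(\nu)\ll\min(\lvert\nu\rvert^{-1},(\nu^2\Delta)^{-1})$ makes the series absolutely and uniformly convergent, and its sum, being continuous with the same Fourier coefficients as the continuous (piecewise linear, trapezoidal) function $\psi$, must coincide with $\psi$. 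With these remarks added, your proof is complete.
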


We note that we could have used Vaaler polynomials
\cite{vaaler1985:some_extremal_functions}, however, we do not gain
anything by doing so as the estimates we get are already best
possible. Setting
\begin{equation}\label{mani:abd}
\begin{split}
\delta=P^{-\gamma},\quad
\begin{aligned}
\alpha_-&=\sum_{\lambda=1}^\ell d_\lambda q^{-\lambda}+(2\delta)^{-1},&
\beta_-&=\sum_{\lambda=1}^\ell d_\lambda q^{-\lambda}+q^{-\ell}-(2\delta)^{-1},\\
\alpha_+&=\sum_{\lambda=1}^\ell d_\lambda q^{-\lambda}-(2\delta)^{-1},&
\beta_+&=\sum_{\lambda=1}^\ell d_\lambda q^{-\lambda}+q^{-\ell}+(2\delta)^{-1}.
\end{aligned}
\end{split}
\end{equation}
and an application of Lemma \ref{vin:lem12} with
$(\alpha,\beta,\delta)=(\alpha_-,\beta_-,\delta)$ and
$(\alpha,\beta,\delta)=(\alpha_+,\beta_+, \delta)$,
respectively, provides us with two functions $\mathcal{I}_-$ and
$\mathcal{I}_+$. By our choice of
$(\alpha_\pm,\beta_\pm,\delta)$ it is immediate that
\begin{equation}\label{uglI}
\mathcal{I}_-(t)\leq\mathcal{I}(t)\leq\mathcal{I}_+(t) \qquad
(t\in\mathbb{R}).
\end{equation}
Lemma \ref{vin:lem12} also implies that these two functions have
Fourier expansions
\begin{align}\label{mani:Ifourier}
\mathcal{I}_\pm(t)=q^{-\ell}\pm P^{-\gamma}+
  \sum_{\substack{\nu=-\infty\\\nu\neq0}}^\infty A_\pm(\nu)e(\nu t)
\end{align}
satisfying
\begin{gather*}
\lvert A_\pm(\nu)\rvert
\ll\min(\lvert\nu\rvert^{-1},P^{\gamma}\lvert\nu\rvert^{-2}).
\end{gather*}
In a next step we want to replace $\mathcal{I}$ by $\mathcal{I}_+$
in (\ref{mani:NthetatoNstar}). For this purpose we observe, using \eqref{uglI},
and \eqref{mani:Ifourier} that
\begin{gather*}
\lvert\mathcal{I}(t)-q^{-\ell}\rvert
  \ll P^{-\gamma} + \sum_{\substack{\nu=-\infty\\\nu\neq0}}^\infty
  A_\pm(\nu)e(\nu t).
\end{gather*}
Thus setting $t=q^{-j}f(p)$ and summing over $p\leq P$ yields
\begin{gather}\label{mani:0.5}
\lvert\sum_{p\leq P}\mathcal{I}(q^{-j}f(p))-\frac{\pi(P)}{q^{\ell}}\rvert
\ll\pi(P)P^{-\gamma}+\sum_{\substack{\nu=-\infty\\\nu\neq0}}^\infty
A_{\pm}(\nu)\sum_{p\leq P}e\left(\frac{\nu}{q^j}f(p)\right).
\end{gather}

Now we consider the coefficients $A_\pm(\nu)$. Noting
\eqref{mani:A} one observes that
\begin{gather*}
A_\pm(\nu)\ll\begin{cases}
  \nu^{-1},       &\text{for }\lvert\nu\rvert\leq P^{\gamma};\\
  P^{\gamma}\nu^{-2}, &\text{for }\lvert\nu\rvert>P^{\gamma}.
          \end{cases}
\end{gather*}
Estimating all summands with $\lvert\nu\rvert>P^{\gamma}$ trivially we get
\begin{gather*}
\sum_{\substack{\nu=-\infty\\\nu\neq0}}^\infty
  A_\pm(\nu)e\left(\frac{\nu}{q^j}f(p)\right)
\ll\sum_{\nu=1}^{P^{\gamma}}\nu^{-1}e\left(\frac{\nu}{q^j}f(p)\right)+P^{-\gamma}.
\end{gather*}
Using this in \eqref{mani:0.5} yields
\begin{gather*}
\lvert\sum_{p\leq P}\mathcal{I}(q^{-j}f(p))-\frac{\pi(P)}{q^{\ell}}\rvert
\ll\pi(P)P^{-\gamma}+\sum_{\nu=1}^{P^{\gamma}}
\nu^{-1}S(P,j,\nu),
\end{gather*}
where we have set 
\begin{gather}\label{S_Pjnu}
S(P,j,\nu):=\sum_{p\leq P}e\left(\frac{\nu}{q^j}f(p)\right).
\end{gather}

\section{Exponential sum estimates}\label{sec:expon-sum-estim}
In the present section we will focus on the estimation of the sum
$S(P,j,\nu)$ for different ranges of $j$. Since $j$ describes the
position within the $q$-ary expansion of $f(p)$ we will call these
ranges the ``most significant digits'', the ``least significant
digits'' and the ``digits in the middle'', respectively.

Now, if $\theta_r>k\geq0$, \textit{i.e} the leading coefficient of $f$ origins
from the pseudo polynomial part $g$, then we consider the two ranges
$$1\leq q^j\leq P^{\theta_r-\rho}\quad\text{and}\quad
P^{\theta_r-\rho}<q^j\leq P^{\theta_r}.$$
For the first one we will apply Proposition \ref{prop:least_significant} and
for the second one Proposition \ref{prop:most_significant}.

On the other hand, if $k>\theta_r>0$, meaning that the leading coefficient of
$f$ origins from the polynomial part $h$, then we have an additional
part. In particular, in this case we will consider the three ranges
$$1\leq q^j\leq P^{\theta_r-\rho},\quad
P^{\theta_r-\rho}<q^j\leq P^{k-1+\rho},\quad\text{and}\quad
P^{k-1+\rho}<q^j\leq P^{k}.$$ We will, similar to above, treat the
first and last range by Proposition~\ref{prop:least_significant} and
Proposition \ref{prop:most_significant}, respectively. For the middle
range we will apply Proposition \ref{prop:middle_digits}. Since
$2\rho<\theta_r$, we note that the middle range is empty if $k=1$.

Since the size of $j$ represents the position of the digit in the
expansion (\textit{cf.} \eqref{position}), we will deal in the
following subsection with the ``most significant digits'', the ``least
significant digits'' and the ``digits in the middle'', respectively.

\subsection{Most significant digits}
We start our series of estimates for the exponential sum $S(P,j,\nu)$
for $j$ being in the highest range. In particular, we want to show the
following
\begin{prop}\label{prop:most_significant}
Suppose that for some $k\geq1$ we have $\lvert
f^{(k)}(x)\rvert\geq\Lambda$ for any $x$ on $[a,b]$ with
$\Lambda>0$. Then
$$S(P,j,\nu)\ll\frac1{\log P}\Lambda^{-\frac1k}+\frac{P}{(\log P)^G}.$$
\end{prop}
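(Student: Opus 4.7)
The plan is to follow the classical Vinogradov--Vaughan approach for exponential sums over primes: Vaughan's identity handles the arithmetic structure, and van der Corput's $k$-th derivative test handles the analytic oscillation, with the hypothesis $|f^{(k)}(x)|\geq\Lambda$ feeding directly into the latter.

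First, by partial summation (and by absorbing the contribution of proper prime powers, which is of size $O(P^{1/2})$, into the allowed error $O(P/(\log P)^G)$), the problem reduces to bounding
\[
T(P):=\sum_{n\leq P}\Lambda_{\mathrm{vM}}(n)\,e\!\left(\tfrac{\nu}{q^j}f(n)\right)
\]
by $\Lambda^{-1/k}+P(\log P)^{1-G}$, where $\Lambda_{\mathrm{vM}}$ denotes the von Mangoldt function (distinct from the derivative bound $\Lambda$ in the statement); dividing by $\log P$ then yields the desired estimate for $S(P,j,\nu)$. I would apply Vaughan's identity with parameters $u=v=P^{1/3}$, decomposing $T(P)$ into a bounded combination of Type I sums $\sum_{m\leq M}a_m\sum_{n\leq P/m}e(\phi(mn))$ with $M\leq P^{2/3}$ and divisor-bounded coefficients $a_m$, and Type II sums $\sum_{M<m\leq 2M}\sum_{N<n\leq 2N}a_m b_n\,e(\phi(mn))$ with $P^{1/3}\leq M,N$ and $MN\asymp P$, where $\phi(x)=\tfrac{\nu}{q^j}f(x)$.

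For the Type I contributions, I would treat the inner sum by van der Corput's $k$-th derivative test: since $x\mapsto\phi(mx)$ has $k$-th derivative $m^k\phi^{(k)}(mx)$ bounded below by $m^k$ times the effective $\Lambda$, the inner sum yields cancellation of the required order, and summation against $a_m$ produces the total bound. The Type II sums are handled by Cauchy--Schwarz in $m$, after which expansion of the square and the substitution $h=n-n'$ reduce the problem to estimating $\sum_m e\bigl(\phi(m(n'+h))-\phi(mn')\bigr)$, whose phase inherits a derivative lower bound from $f^{(k)}$; a final application of the derivative test completes the estimate.

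The main obstacle is the bookkeeping required to verify that the losses incurred at each step (Vaughan's identity, Cauchy--Schwarz, differencing) combine into precisely the clean saving $\Lambda^{-1/k}$ rather than a weaker power; this is achieved by carefully balancing the Vaughan parameters $u,v$ against $\Lambda$ when optimizing the sum of the Type I and Type II contributions.
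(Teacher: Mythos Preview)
Your approach is a substantial detour and, as sketched, does not reach the stated bound. The paper's proof is far more direct: it writes $S(P,j,\nu)$ as a Riemann--Stieltjes integral $\int_2^P e\bigl(\tfrac{\nu}{q^j}f(t)\bigr)\,d\pi(t)$, replaces $d\pi(t)$ by $dt/\log t$ via the prime number theorem (this is precisely where the error $P/(\log P)^G$ arises), applies the second mean-value theorem to strip off the weight $1/\log t$, and then invokes the oscillatory \emph{integral} bound $\bigl|\int e(F)\bigr|\ll\Lambda^{-1/k}$ (Lemma~8.10 of Iwaniec--Kowalski). No Vaughan, no Type~I/II, no Cauchy--Schwarz.

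The specific gap in your plan is the exponent. The clean saving $\Lambda^{-1/k}$ is a feature of van der Corput's bound for \emph{integrals}; the $k$-th derivative test for exponential \emph{sums} over integers produces exponents of the shape $\lambda^{1/(2^k-2)}$ and companion terms $N^{1-2^{2-k}}\lambda^{-1/(2^k-2)}$, not $\lambda^{-1/k}$. After Vaughan the inner sums in both the Type~I and Type~II pieces are genuine sums over integers, so the derivative test you propose to apply will not output $\Lambda^{-1/k}$, and no amount of parameter balancing in $u,v$ will recover that precise form. (Indeed, the very presence of the term $P/(\log P)^G$ in the target, with $G$ arbitrary, should signal that the PNT is being applied directly rather than a bilinear decomposition, which typically saves only a fixed power of $\log$ or of $P$.) Your machinery is the right one for the \emph{other} propositions in this section (least significant and middle digits), where one does need Vaughan and genuine power savings; but for the most significant digits the argument is essentially analytic, and the prime structure is handled entirely by the PNT.
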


The main idea of the proof is to use Riemann-Stieltjes integration
together with 
\begin{lem}[{\cite[Lemma 8.10]{iwaniec_kowalski2004:analytic_number_theory}}]
\label{ik:lem8.10}
Let $F\colon[a,b]\to\R$ and suppose that for some $k\geq1$
we have $\lvert F^{(k)}(x)\rvert\geq\Lambda$ for any $x$ on $[a,b]$
with $\Lambda>0$. Then
\[
\lvert\int_a^be(F(x))\mathrm{d}x\rvert
\leq k2^k\Lambda^{-1/k}.
\]
\end{lem}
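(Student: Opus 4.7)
The plan is to prove this classical van der Corput type bound by induction on the order $k$ of the derivative, reducing a hypothesis on the $k$-th derivative to one on the $(k-1)$-th by excising a subinterval where $F^{(k-1)}$ is small and applying the inductive hypothesis on the complement.

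For the base case $k=1$: since $F'$ is continuous with $\lvert F'(x)\rvert \geq \Lambda > 0$, the derivative has constant sign on $[a,b]$, and we may assume $F'(x) \geq \Lambda$. The substitution $u = F(x)$ rewrites the integral as $\int_{F(a)}^{F(b)} e(u)/F'(F^{-1}(u))\, du$, where the weight is positive and bounded by $1/\Lambda$. A standard integration by parts, or an application of the second mean value theorem to the real and imaginary parts, exploits that the total variation of this weight is at most $1/\Lambda$ and produces the bound $\leq 2/\Lambda = 1 \cdot 2^1 \Lambda^{-1}$, matching the claimed constant.

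For the inductive step, assume the result for $k-1$ and let $\lvert F^{(k)}(x) \rvert \geq \Lambda$. By continuity $F^{(k)}$ has constant sign; WLOG $F^{(k)}(x) \geq \Lambda$, so $F^{(k-1)}$ is strictly monotone with derivative of modulus at least $\Lambda$. For a parameter $\mu > 0$ to be optimized, set $E_\mu := \{x \in [a,b] \colon \lvert F^{(k-1)}(x)\rvert \leq \mu\}$; monotonicity of $F^{(k-1)}$ forces $E_\mu$ to be a single subinterval on which $F^{(k-1)}$ ranges over an interval of length at most $2\mu$, so that the Lebesgue measure of $E_\mu$ is at most $2\mu/\Lambda$. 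On the complement $[a,b] \setminus E_\mu$, which consists of at most two subintervals, $\lvert F^{(k-1)}\rvert \geq \mu$ holds, so the inductive hypothesis applies on each piece and gives a bound $(k-1) 2^{k-1} \mu^{-1/(k-1)}$. Estimating the integral over $E_\mu$ trivially by its length yields
$$\lvert \int_a^b e(F(x))\, dx \rvert \leq \frac{2\mu}{\Lambda} + 2(k-1) 2^{k-1} \mu^{-1/(k-1)}.$$
Choosing $\mu = \Lambda^{(k-1)/k}$ balances the two terms at $\Lambda^{-1/k}$ and yields $(2 + (k-1) 2^k) \Lambda^{-1/k} \leq k \cdot 2^k \Lambda^{-1/k}$, the final inequality reducing to the routine $2 \leq 2^k$ valid for $k \geq 1$.

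The main obstacle is the careful constant accounting throughout the induction: one must track the factor of $2$ coming from splitting the complement into at most two subintervals and verify that the simple choice $\mu = \Lambda^{(k-1)/k}$ produces exactly the stated constant $k \cdot 2^k$ rather than a larger one. A secondary technical point is the base case $k=1$, where one has no a priori smoothness beyond $C^1$; the standard resolution exploits the monotonicity of $F$ inherited from the constant sign of $F'$, combined with the second mean value theorem applied after the substitution $u = F(x)$, which effectively replaces differentiation by a total-variation bound controlled by $1/\Lambda$.
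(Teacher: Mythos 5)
Your induction skeleton is the classical van der Corput argument (the paper itself gives no proof, only the citation to Iwaniec--Kowalski), and the bookkeeping in the inductive step is correct: $|E_\mu|\le 2\mu/\Lambda$, at most two complementary intervals, the choice $\mu=\Lambda^{(k-1)/k}$, and the reduction of the constant check to $2\le 2^k$ all work. The genuine gap is the base case $k=1$. From $|F'(x)|\ge\Lambda$ alone you cannot conclude that the weight $u\mapsto 1/F'(F^{-1}(u))$ has total variation at most $1/\Lambda$, nor may you invoke the second mean value theorem: a sup-norm bound does not control variation, and the second mean value theorem needs the weight to be monotone, i.e.\ $F'$ monotone. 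Indeed the $k=1$ statement is simply false without such a hypothesis: choosing $F$ so that $1/F'(F^{-1}(u))=(2+\cos 2\pi u)/(4\Lambda)$ on a long $u$-interval of length $M$ gives $F'\ge\Lambda$ everywhere, yet $\int_a^b e(F(x))\,\mathrm{d}x$ has size of order $M/\Lambda$, which exceeds $2\Lambda^{-1}$ for large $M$. The original Lemma 8.10 of Iwaniec--Kowalski carries the extra proviso that $F'$ be monotone when $k=1$; the quotation in the paper drops it, but your proof must not, since your argument as written purports to derive the variation bound from $|F'|\ge\Lambda$ only.

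The repair is routine but should be made explicit. State and prove the base case under the additional hypothesis that $F'$ is monotone (then $1/F'\circ F^{-1}$ is monotone with values in $(0,\Lambda^{-1}]$, and either the second mean value theorem or one integration by parts against $\mathrm{d}g$ gives a bound well below $2\Lambda^{-1}$). Then observe that in your inductive step the case $k=2$ only ever invokes the first-derivative bound on the two complementary subintervals, where $F''$ has constant sign, so $F'$ is automatically monotone there; for $k\ge 3$ the induction never touches the $k=1$ case directly. With that adjustment the proof is complete for all $k\ge2$ as stated, and for $k=1$ under the monotonicity proviso actually present in the cited lemma.
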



\begin{proof}[Proof of Proposition \ref{prop:most_significant}]
We rewrite the sum into a Riemann-Stieltjes integral:
\begin{align*}
S(P,j,\nu)=\sum_{p\leq P}e\left(\frac{\nu}{q^j}f(p)\right)
=\int_{2}^{P}e\left(\frac{\nu}{q^j}f(t)\right)\mathrm{d}\pi(t)+\mathcal{O}(1).
\end{align*}
Then we apply the prime number theorem in the form
\eqref{pnt} to gain the usual integral back. Thus
\begin{align*}
S(P,j,\nu)
=\int_{P(\log P)^{-G}}^{P}
e\left(\frac{\nu}{q^j}f(t)\right)
\frac{\mathrm{d}t}{\log t}
+\mathcal{O}\left(\frac{P}{(\log P)^G}\right).
\end{align*}
Now we use the second mean-value theorem to get
\begin{equation}\label{mani:res:most}
\begin{split}
S(P,j,\nu)\ll\frac1{\log P}\sup_{\xi}
  \lvert\int_{P(\log P)^{-G}}^{\xi}e\left(\frac{\nu}{q^j}f(t)\right)\mathrm{d}t\rvert
  +\frac{P}{(\log P)^G}.
\end{split}
\end{equation}
Finally an application of Lemma \ref{ik:lem8.10} proves the lemma.
\end{proof}

\subsection{Least significant digits} Now we turn our attention to the
lowest range of $j$. In particular, the goal is the proof of the
following
\begin{prop}\label{prop:least_significant}
Let $P$ and $\rho$ be positive reals and $f$ be a pseudo-polynomial as in
\eqref{pseudo:poly:split}. If $j$ is such that
\begin{gather}\label{mani:gammarange}
  1\leq q^j\leq P^{\theta_r-\rho}
\end{gather}
holds, then for $1\leq\nu\leq P^\gamma$ there exists $\eta>0$
(depending only on $f$ and $\rho$) such that
\begin{gather*}
  S(P,j,\nu)=(\log P)^8P^{1-\eta}.
\end{gather*}
\end{prop}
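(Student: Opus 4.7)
The quantity to bound is $S(P,j,\nu) = \sum_{p \leq P} e(F(p))$ with $F(x) := \nu q^{-j} f(x)$. The range \eqref{mani:gammarange} together with $\nu \geq 1$ gives $\nu q^{-j} \geq P^{-\theta_r + \rho}$, and since the leading pseudo-polynomial term of $f$ is $\alpha_r x^{\theta_r}$ with $\theta_r \notin \mathbb{Z}$, the $K$-th derivative of $F$ on $[P/2, P]$ has order at least $\asymp P^{\rho - K}$ for every fixed integer $K$. This is the analytic source of cancellation that one must exploit, in the spirit of the derivative-based Lemma \ref{ik:lem8.10}; a direct integration as in Proposition \ref{prop:most_significant} will not suffice, because the sum is over primes rather than integers and there is no multiplicative cancellation to be had from primality alone.

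The plan is to apply Vaughan's identity to $\sum_{p \leq P} (\log p)\, e(F(p))$ and recover $S(P,j,\nu)$ by partial summation in $\log p$. After the customary dyadic decomposition this reduces the estimation to Type~I sums
\[
T_I = \sum_{m \leq U} a_m \sum_{n \leq P/m} e(F(mn))
\]
and Type~II sums
\[
T_{II} = \sum_{U < m \leq V} \sum_{V < n \leq P/m} a_m b_n\, e(F(mn)),
\]
with divisor-bounded coefficients, for parameters $U,V$ to be optimized. This is precisely the device used for the single-term case $\alpha x^\beta$ in \cite{madritsch_tichy2013:construction_normal_numbers}, and the factor $(\log P)^8$ in the target bound is consistent with the standard cost of Vaughan's identity combined with the dyadic splittings. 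For $T_I$, I would fix $m$, pass from the inner sum to the integral $\int e(F(mt))\,dt$ by Abel summation, and invoke Lemma \ref{ik:lem8.10} with $F(mt)$ viewed as a function of $t$: its $K$-th derivative is $\asymp m^K \cdot \nu q^{-j} \cdot (mt)^{\theta_r - K}$, of size at least $m^K P^{\rho - K}$ on the tail $t \asymp P/m$, which delivers a power saving. For $T_{II}$, I would apply Cauchy--Schwarz in $m$ and then van der Corput's $A$-process in $n$, reducing matters to exponential sums with differenced phase $F(m(n+h)) - F(mn)$, whose derivatives in $m$ inherit a suitable lower bound uniformly in small shifts $h$.

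The main obstacle is that, because the exponents $\theta_i$ are non-integer, Weyl's inequality is unavailable and every saving must be squeezed from derivative estimates (van der Corput lemmas), whose yield depends delicately on the chosen order $K$. One must pick $K$ large enough to dominate the contributions of the polynomial part $h$ of degree $k$, yet small enough that the savings from $g$ outweigh the divisor and Cauchy--Schwarz losses; this balancing act is made possible precisely by the hypothesis $\rho < 1/(4(k+1))$. The lower-order pseudo-polynomial terms and the polynomial $h$ must be treated as perturbations of the leading term, which is legitimate since in the range $q^j \leq P^{\theta_r - \rho}$ their contributions to the relevant derivatives are strictly dominated by those of $\alpha_r x^{\theta_r}$. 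Optimizing $U$, $V$, and the differencing order in terms of $\rho$, $\theta_r$, and $k$ will then produce a power saving of the shape $P^{1-\eta}$ with $\eta = \eta(f,\rho) > 0$, as required.
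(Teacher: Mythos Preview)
Your overall architecture---convert to von Mangoldt weights, apply Vaughan's identity, split dyadically, and handle Type~II by Cauchy--Schwarz plus differencing---is exactly the paper's route (Lemma~\ref{mr:lem11} followed by Lemma~\ref{bkmst:lem23}). Two implementation points, however, do not work as written.

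For Type~I you propose to ``pass from the inner sum to the integral $\int e(F(mt))\,dt$ by Abel summation'' and then invoke Lemma~\ref{ik:lem8.10}. Abel summation does not effect any such conversion: after Vaughan the inner sum runs over \emph{all} integers $n$ in a long interval, and there is no counting-function smoothing analogous to the replacement of $d\pi(t)$ by $dt/\log t$ that made this device legitimate in Proposition~\ref{prop:most_significant}. One needs a van der Corput bound for the \emph{sum} itself. The paper supplies this via Lemma~\ref{bkmst:lem25}, a packaged Weyl--van der Corput estimate for sums of the shape $\sum_y e(g(y)+h(y))$ with $h$ a polynomial, and applies it directly to the inner sum of $S_1$ and, after Cauchy--Schwarz and a swap of the order of summation, to the inner sum over $x$ in $S_2$.

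The second issue is your claim that the polynomial part $h$ is a perturbation whose contributions to the ``relevant derivatives'' are strictly dominated by those of $\alpha_r x^{\theta_r}$. When $k=\deg h>\theta_r$---a case the paper explicitly allows---this is false for derivative orders $K\le k$: there $h^{(K)}$ is the main term and need not admit a usable lower bound. The correct observation is that $h^{(K)}\equiv 0$ once $K>k$, and only then does the non-integrality of $\theta_r$ give $\lvert\frac{\partial^K}{\partial y^K}\frac{\nu}{q^j}f(xy)\rvert\asymp(\nu/q^j)X^{\theta_r}Y^{-K}\gg X^{\rho}Y^{-K}$. One must therefore Weyl-difference at least $k$ times before any derivative test bites; this is precisely why Lemma~\ref{bkmst:lem25} is formulated with the polynomial summand separated out rather than absorbed into $g$.
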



Before we launch into the proof we collect some tools that will be
necessary in the sequel. A standard idea for estimating exponential
sums over the primes is to rewrite them into ordinary exponential sums
over the integers having von Mangoldt's function as weights and then
to apply Vaughan's identity. We denote by
\[
\Lambda(n)=\begin{cases}
\log p,&\text{if $n=p^k$ for some prime $p$ and an integer $k\geq1$;}\\
0,&\text{otherwise}.
\end{cases}
\]
von Mangoldt's function. For the rewriting process we use the following
\begin{lem}
\label{mr:lem11}
Let $g$ be a function such that $\lvert g(n)\rvert\leq 1$ for all
integers $n$. Then
\[
\lvert\sum_{p\leq P}g(p)\rvert\ll\frac1{\log P}\max_{t\leq P}
\lvert\sum_{n\leq t}\Lambda(n)g(n)\rvert+\mathcal{O}(\sqrt{P}).
\]
\end{lem}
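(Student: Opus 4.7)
My approach would be a standard partial-summation reduction. Write $A(t)=\sum_{p\le t}(\log p)g(p)$ and $B(t)=\sum_{n\le t}\Lambda(n)g(n)$, and let $M=\max_{t\le P}|B(t)|$. The strategy is first to pass from $\sum_{p\le P}g(p)$ to $A(P)$ by Abel summation with weight $1/\log t$, then to replace $A$ by $B$ up to an error from prime powers, and finally to bound the resulting integral by carefully splitting the range at $\sqrt P$.

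Concretely, Abel summation against $f(t)=1/\log t$ gives
\[
\sum_{p\le P}g(p)=\frac{A(P)}{\log P}+\int_{2}^{P}\frac{A(t)}{t(\log t)^{2}}\,dt.
\]
The difference $A(t)-B(t)$ comes only from prime powers $p^{k}$ with $k\ge2$; using Chebyshev-type bounds one has
\[
|A(t)-B(t)|\le\sum_{k\ge2}\sum_{p^{k}\le t}\log p\ll\sqrt{t}.
\]
Thus $|A(P)|/\log P\ll M/\log P+\sqrt{P}/\log P$, which is absorbed in the claimed error.

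The delicate point is the integral, because a naive estimate $|A(t)|\le M+C\sqrt P$ combined with $\int_{2}^{P}dt/(t(\log t)^{2})=O(1)$ would only yield $O(M)$, which is far too weak. Instead I would split at $\sqrt P$. On $[\sqrt P,P]$ one has $\log t\ge\tfrac12\log P$, so
\[
\int_{\sqrt P}^{P}\frac{dt}{t(\log t)^{2}}=\frac{1}{\log\sqrt P}-\frac{1}{\log P}\ll\frac{1}{\log P},
\]
and this part contributes $\ll (M+\sqrt P)/\log P$. On $[2,\sqrt P]$ I cannot use $M$ usefully, but the trivial bound $|A(t)|\le\theta(t)\ll t$ gives
\[
\int_{2}^{\sqrt P}\frac{|A(t)|}{t(\log t)^{2}}\,dt\ll\int_{2}^{\sqrt P}\frac{dt}{(\log t)^{2}}\ll\frac{\sqrt P}{(\log P)^{2}},
\]
which is $O(\sqrt P)$.

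Collecting the three contributions yields the stated estimate. The main obstacle is exactly this splitting of the Abel integral: the factor $1/\log P$ in the final bound comes from the tail $[\sqrt P,P]$, while the head $[2,\sqrt P]$ has to be handled by the trivial Chebyshev bound rather than by $M$, and is absorbed into the $O(\sqrt P)$ error term.
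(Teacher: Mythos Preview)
Your proof is correct and is essentially the same argument as the paper's: Abel summation against $1/\log t$, splitting the integral at $\sqrt{P}$, using the trivial Chebyshev bound $\theta(t)\ll t$ on $[2,\sqrt{P}]$, bounding by the maximum on $[\sqrt{P},P]$ via $\int_{\sqrt{P}}^{P}\frac{dt}{t(\log t)^2}=\frac{1}{\log P}$, and finally absorbing the prime-power difference $|A(t)-B(t)|\ll\sqrt{t}$ into the $O(\sqrt{P})$ term. The only cosmetic difference is ordering: the paper first bounds everything in terms of $\max_{\sqrt{P}<t\le P}|A(t)|$ and then converts $A$ to $B$ at the end, whereas you convert $A$ to $B$ first and carry $M$ throughout.
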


\begin{proof}
This is Lemma 11 of \cite{mauduit_rivat2010:sur_un_probleme}. However,
the proof is short and we need some piece later.

We start with a summation by parts yielding
\[\sum_{p\leq P}g(p)=\frac1{\log P}\sum_{p\leq
  x}\log(p)g(p)+\int_2^P\left(\sum_{p\leq
    t}\log(p)g(p)\right)\frac{\mathrm{d}t}{t(\log t)^2}.\]

Now we cut the integral at $\sqrt{P}$ and use Chebyshev's inequality
(\textit{cf.} \cite[Th\'eor\`eme 1.3]{tenenbaum1995:introduction_la_theorie})
in the form $\sum_{p\leq
  t}\log(p)\leq\log(t)\pi(t)\ll t$ for the lower part. Thus
\begin{align*}
\lvert\sum_{p\leq P}g(p)\rvert
&\leq\left(\frac1{\log
    P}+\int_{\sqrt{P}}^P\frac{\mathrm{d}t}{t(\log t)^2}\right)
  \max_{\sqrt{P}<t\leq P}\lvert\sum_{p\leq
    P}\log(p)g(p)\rvert+\mathcal{O}(\sqrt{P})\\
&=\frac2{\log P}\max_{\sqrt{P}<t\leq P}\lvert\sum_{p\leq t}\log(p)g(p)\rvert+\mathcal{O}(\sqrt{P}).
\end{align*}

Finally we again use Chebyshev's inequality $\pi(t)\ll t/\log(t)$ to obtain
\begin{gather}\label{mani:log_Mangoldt_equivalence}
\lvert\sum_{n\leq t}\Lambda(n)g(n)-\sum_{p\leq t}\log(p)g(p)\rvert
\leq\sum_{p\leq\sqrt{t}}\log(p)\sum_{a=2}^{\lf\frac{\log(t)}{\log(p)}\rf}1
\leq\pi(\sqrt{t})\log(t)\ll\sqrt{t}.
\end{gather}
\end{proof}

In the next step we use Vaughan's identity to subdivide this weighted
exponential sum into several sums of Type I and II.
\begin{lem}[{\cite[Lemma
    2.3]{bergelson_kolesnik_madritsch+2014:uniform_distribution_prime}}]
\label{bkmst:lem23}
Assume $F(x)$ to be any function defined on the real line, supported on $[P/2, P]$
and bounded by $F_0$. Let further $U,V,Z$ be any parameters satisfying $3
\leq U < V < Z < P$, $Z \geq 4U^2$, $P \geq 64 Z^2 U$, $V^3 \geq 32 P$ and
$Z-\frac12\in\mathbb{N}$. Then
$$\left| \sum_{P/2< n\leq P} \Lambda(n) F(n)  \right| \ll K
\log P + F_0 +  L (\log P)^8 ,$$
where $K$ and $L$ are defined by
\begin{align*}
K&=\max_M\sum_{m=1}^\infty d_3(m)\left\vert\sum\limits_{Z<n\leq M} F(mn)\right\vert,\\
L&=\sup\sum_{m=1}^\infty d_4(m)\left\vert\sum\limits_{U < n < V} b(n) F(mn)\right\vert,
\end{align*}
where the supremum is taken over all arithmetic functions $b(n)$ satisfying $|b(n)| \leq d_3(n).$
\end{lem}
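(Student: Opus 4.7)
The plan is to establish the bound via Vaughan's identity followed by dyadic dissection. I would begin by invoking Vaughan's identity, which expresses $\Lambda(n) = A_1(n) - A_2(n) + A_3(n)$ for $n > V$, where
\[
A_1(n) = \sum_{\substack{bc = n \\ b \leq U}} \mu(b) \log c, \quad A_2(n) = \sum_{\substack{bcd = n \\ b \leq U,\, c \leq V}} \mu(b) \Lambda(c), \quad A_3(n) = \sum_{\substack{bc = n \\ b > U,\, c > V}} \alpha_b \Lambda(c),
\]
with $|\alpha_b| \leq \log b$. Inserting this into $\sum_{P/2 < n \leq P} \Lambda(n) F(n)$ and handling the range $n \leq V$ trivially (which contributes at most $O(F_0)$ since the number of prime powers below $V < P$ produces only a bounded number of effective terms against the uniform bound $F_0$) decomposes the target into a Type~I part coming from $A_1, A_2$ and a Type~II part coming from $A_3$.

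For the Type~I part, each summand has the form $\sum_m c_m F(mn)$ after fixing the product $m = bc$ (respectively $m = bcd$). Removing the weight $\log c$ by partial summation (at cost one factor of $\log P$) and dyadically dissecting the remaining variable into $O(\log P)$ intervals $(M/2, M]$ with $M \geq Z$, where the compatibility $Z \geq 4U^2$ and the half-integer condition $Z - \tfrac{1}{2} \in \N$ align the dyadic ranges with the cutoff at $Z$, produces pieces of the form $\sum_m |c_m|\,|\sum_{Z < n \leq M} F(mn)|$. The coefficient satisfies $|c_m| \leq d_3(m)$, since $m$ factors as a product of at most three divisors arising from the $\mu$-piece, the Mangoldt-piece, and the residual dyadic index. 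Summing over the $O(\log P)$ dyadic blocks yields the bound $\ll K \log P$.

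The Type~II sum $\sum_{U < b \leq P/V}\sum_{c > V} \alpha_b \Lambda(c) F(bc)$ is a genuine bilinear sum. The condition $V^3 \geq 32P$ together with $P \geq 64 Z^2 U$ guarantees that after dyadic dissection of the variable $b$ into intervals $(2^j U, 2^{j+1}U] \subset (U, V)$, the companion $c$ lies strictly above $V$ and the intervals remain disjoint. Interchanging the roles of the two variables so that the short one, now labeled $n$, runs in $(U, V)$, the sum acquires the form of $L$ with coefficients $|b(n)| \leq d_3(n)$, the factor $d_3$ arising from combining the $\mu$-factor, the weight bounded by $\log$, and the dyadic bookkeeping. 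Accumulating the log losses from (i) Vaughan's identity itself, (ii) the several partial summations removing logarithmic weights, (iii) the dyadic dissections of both variables, (iv) the bound $\Lambda(c) \ll \log P$, and (v) the rearrangement producing the outer index $m$, yields the factor $(\log P)^8$.

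The principal technical obstacle will be the precise tracking of divisor-function bounds on the coefficients after each rearrangement, since naive bookkeeping typically inflates $d_3$ to $d_5$ or $d_6$. Keeping the coefficient of the outer variable at the stated $d_3(m)$ inside $K$ and $d_4(m)$ inside $L$ requires careful use of Hall's inequality (or moment estimates for the divisor function) when consolidating overlapping dyadic pieces, and a delicate balance of the parameter inequalities $Z \geq 4U^2$, $P \geq 64 Z^2 U$, $V^3 \geq 32P$ to ensure that every decomposed range is non-empty and that the reassembled sums match exactly the definitions of $K$ and $L$.
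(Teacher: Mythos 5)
First, note that the paper does not prove this statement at all: it is imported verbatim as Lemma 2.3 of Bergelson--Kolesnik--Madritsch--Son--Tichy (itself a variant of Harman's classical decomposition, obtained from Vaughan's identity). So the only comparison available is with that source, and your outline does follow the same general strategy (Vaughan's identity, Type I/Type II splitting, dyadic dissection). However, as a proof it has a genuine gap at the decisive step: the reduction of the bilinear part to exactly the form defining $L$, i.e.\ with one variable confined to $(U,V)$ and coefficients bounded by $d_4$ and $d_3$. In the Type II piece of Vaughan's identity the variables satisfy $b>U$, $c>V$, $bc\in(P/2,P]$, so $b$ ranges up to $P/V$, and since $V^3\geq 32P$ only gives $P/V\leq V^2/32$, both variables may lie in $[V,P/V]$; your claim that the dyadic blocks of $b$ sit inside $(U,V)$, or that swapping the roles of the variables puts the short one in $(U,V)$, is false as stated. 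Handling precisely this situation --- regrouping products of two mid-sized factors into a single long variable exceeding $Z$ (hence a $K$-type sum), and showing every configuration falls either into the $K$-bin or the $L$-bin --- is where the hypotheses $Z\geq 4U^2$, $P\geq 64Z^2U$, $V^3\geq 32P$ actually enter, and it is the key idea missing from your sketch. The condition $Z-\frac12\in\N$ is likewise not about ``aligning dyadic ranges'' but simply ensures the cutoff $Z$ is not an integer, so inequalities $n>Z$ have no boundary ambiguity.

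Beyond that, the quantitative claims are asserted rather than derived: the exponent $8$ in $(\log P)^8$ is not obtained by listing five sources of logarithms, and the divisor-function bookkeeping ($d_3(m)$ in $K$, $d_4(m)$ together with $|b(n)|\leq d_3(n)$ in $L$) comes from the combinatorial structure of the regrouped variables in the identity, not from Hall's inequality or divisor moment estimates, which play no role here. Smaller inaccuracies: the coefficient of the outer variable in the $A_2$-piece is bounded by $\sum_{bc=m,\,b\leq U,\,c\leq V}\Lambda(c)\leq\log m$, not immediately by $d_3(m)$, so even the Type I bookkeeping needs the correct grouping; and the range $n\leq V$ contributes nothing (not merely $O(F_0)$) because $F$ is supported on $[P/2,P]$ and $V<Z\ll\sqrt{P}$. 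In short, the plan is the right one in outline, but the case analysis that produces exactly the ranges $n>Z$ and $U<n<V$ with the stated weights --- the substance of the cited lemma --- is not carried out, and the one concrete mechanism you propose for the Type II range would fail.
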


After subdividing the weighted exponential sum with Vaughan's identity
we will use the following lemma in order to estimate the occurring
exponential sums.
\begin{lem}[{\cite[Lemma
     2.5]{bergelson_kolesnik_madritsch+2014:uniform_distribution_prime}}]
\label{bkmst:lem25}
   Let $X,k,q\in \mathbb{N}$ with $k,q\geq 0$ and set $K=2^k$ and $Q=
   2^q$. Let $h(x)$ be a polynomial of degree $k$ with real
   coefficients. Let $g(x)$ be a real $(q+k+2)$ times continuously
   differentiable function on $[X/2 , X]$ such that $\left| f^{(r)}(x)
   \right| \asymp F X^{-r}$ $( r = 1, \dots, q+k+2) $.  Then, if $F =
   o (X^{q+2})$ for $F$ and $X$ large enough, we have
 $$\left| \sum_{X/2 < x \leq X} e(g(x) + h(x)) \right| \ll X^{1 - \frac{1}{K}} + X \left( \frac{\log^k X}{F} \right)^{\frac{1}{K}} + X \left( \frac{F}{X^{q+2}} \right)^{\frac{1}{(4KQ-2K)}}.$$
\end{lem}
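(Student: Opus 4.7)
The plan combines two classical devices from van der Corput's method: the $A$-process (Weyl differencing) applied $k$ times to neutralise the polynomial $h(x)$, and the $B$-process (Poisson summation and stationary phase) applied $q$ times afterwards to exploit the smoothness of $g(x)$. The exponent $K = 2^{k}$ reflects that each Weyl differencing squares the sum, and $Q = 2^{q}$ plays the analogous role for the $B$-process.

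First I would iterate the $A$-process $k$ times. A single application rests on the Cauchy--Schwarz inequality
\[
|S|^{2} \ll \frac{X^{2}}{H} + \frac{X}{H} \sum_{0 < |u| \leq H} \Biggl| \sum_{x} e\bigl((g + h)(x + u) - (g + h)(x)\bigr) \Biggr|
\]
valid for any $1 \leq H \leq X$. The differencing map sends a polynomial of degree $k$ to one of degree $k - 1$, and sends a smooth $\phi$ with $|\phi^{(r)}| \asymp F X^{-r}$ to one whose $r$-th derivative has size $\asymp |u| F X^{-r-1}$. Iterating $k$ times annihilates $h$ altogether: its $k$-th finite difference is the constant $k!\, a_{k}\, u_{1} \cdots u_{k}$, and only contributes a unimodular phase to each inner sum. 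The net outcome is
\[
|S|^{K} \ll X^{K-1} + X^{K-k-1} \sum_{0 < u_{1}, \ldots, u_{k} \leq X} \Biggl| \sum_{x} e\bigl(G_{\mathbf{u}}(x)\bigr) \Biggr|,
\]
where $G_{\mathbf{u}}(x)$ is essentially $u_{1} \cdots u_{k}\, g^{(k)}(x)$ and its $r$-th derivative has size $\asymp u_{1} \cdots u_{k}\, F\, X^{-k-r}$ for $r = 1, \ldots, q + 2$.

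Next, for each fixed shift tuple $\mathbf{u}$ I would apply the $B$-process $q$ successive times. A single application replaces $\sum_{x} e(\Phi_{j}(x))$ by $\sum_{m} (\Phi_{j}''(x_{m}))^{-1/2}\, e(\Psi_{j}(m))$ via Poisson summation and stationary phase, where $x_{m}$ is determined by $\Phi_{j}'(x_{m}) = m$ and $\Psi_{j}$ is the Legendre transform of $\Phi_{j}$. Iterating $q$ times, with careful tracking of the transformed amplitude and range at each stage, ultimately yields a bound controlled by the $(q + 2)$-nd derivative of $G_{\mathbf{u}}$, whose size is $\asymp u_{1} \cdots u_{k}\, F\, X^{-k - q - 2}$. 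Summing over $\mathbf{u}$ and extracting the $K$-th root splits the estimate into three competing contributions: the trivial diagonal term $X^{1 - 1/K}$ from Weyl differencing; a first-derivative-test term $X\,(\log^{k} X / F)^{1/K}$ in the range where the phase derivative is so small that the $B$-process is unavailable; and the genuine $B$-process contribution $X\,(F / X^{q+2})^{1/(4KQ - 2K)}$.

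The main obstacle will be the bookkeeping: the $\asymp$ hypothesis on every derivative of $g$ must be shown to propagate with controlled implicit constants through all $k + q$ nonlinear transformations, and the stationary-phase remainder terms must be absorbed into the principal estimate at each step. The hypothesis $F = o(X^{q+2})$ is precisely what guarantees that the $(q+2)$-nd derivative condition remains non-degenerate throughout the iteration, so that every stationary point falls in the interior of the relevant interval and the Legendre transform is well-defined. The unusual denominator $4KQ - 2K$ in the third exponent emerges from balancing the $q$ successive Cauchy--Schwarz/Poisson steps (each roughly doubling the effective exponent) against the factor $K$ already acquired from the $k$ preceding Weyl differencings.
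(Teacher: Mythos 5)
The paper does not prove this lemma at all: it is quoted verbatim from Lemma~2.5 of the cited work of Bergelson, Kolesnik, Madritsch, Son and Tichy, so there is no internal proof to compare against. Measured against the actual proof in that reference, your overall architecture is the right one: $k$ Weyl/van der Corput differencings (the $A$-process) annihilate the degree-$k$ polynomial $h$, the $r$-th derivative of the $k$-fold difference of $g$ is of size $\asymp u_1\cdots u_k\,F X^{-k-r}$, the diagonal gives $X^{1-1/K}$, the ranges where the differenced phase has small derivative give the $X(\log^k X/F)^{1/K}$ term after summing $1/(u_1\cdots u_k)$, and the remaining ranges are handled by a higher-derivative estimate whose exponent $1/(4Q-2)=1/(2^{q+2}-2)$, divided by $K$, produces $1/(4KQ-2K)$.

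The genuine flaw is your second phase: you propose to apply the $B$-process (Poisson summation plus stationary phase) $q$ successive times to the differenced sums. That step would fail, because $B$ is essentially an involution -- applying it to the transformed sum (whose phase is the Legendre transform of the original) returns you, up to admissible errors, to the sum you started from, so iterating it $q$ times gains nothing beyond a single application and cannot generate the exponent $1/(2^{q+2}-2)$. The correct route, and the one behind the cited lemma, is to exploit the $(q+2)$-nd derivative of the differenced phase via the classical van der Corput $(q+2)$-th derivative test, which in exponent-pair language is $q$ further $A$-steps followed by a \emph{single} $B$-step (equivalently, one simply quotes the standard $k$-th derivative test with $k$ replaced by $q+2$); this is exactly where $4Q-2$ comes from. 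Your closing heuristic that the denominator arises from ``$q$ successive Cauchy--Schwarz/Poisson steps, each roughly doubling the exponent'' is in effect describing these $A$-steps, not iterated $B$-steps, so the repair is local: replace the iterated stationary-phase scheme by the $(q+2)$-derivative test applied to each inner sum, keeping your bookkeeping of the derivative sizes $u_1\cdots u_k F X^{-k-r}$, which is what the hypothesis that $g$ be $(q+k+2)$ times differentiable is there for. Also note that $F=o(X^{q+2})$ is not about stationary points lying in the interior; it is the condition making $\bigl(F/X^{q+2}\bigr)^{1/(4KQ-2K)}$ genuinely smaller than $1$, i.e.\ making the third term nontrivial and the derivative-test application admissible in the relevant range.
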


Now we have the necessary tools to state the
\begin{proof}[Proof of Proposition \ref{prop:least_significant}]
An application of Lemma \ref{mr:lem11} yields
\[S(P,j,\nu)\ll\frac1{\log P}\max\lvert\sum_{n\leq
  P}\Lambda(n)e\left(\frac{\nu}{q^j}(g(n)+h(n))\right)\rvert+P^{\frac12}.\]
We split the inner sum into $\leq \log
P$ sub sums of the form $$\lvert \sum\limits_{X< n \leq
  2X}\Lambda(n)e\left(\frac{\nu}{q^j}(g(n)+h(n))\right)\rvert$$ with
$2X \leq P$ and let $S$ be a typical one of them. We may
assume that $X \geq P^{1-\rho}$.

Using Vaughan's identity (Lemma~\ref{bkmst:lem23}) with $U = \frac{1}{4} X^{1/5}$, $V= 4 X^{1/3}$ and $Z$ the
unique number in $\frac12+\mathbb{N}$, which is closest to $\frac{1}{4}
X^{2/5}$, we obtain
\begin{gather}\label{mani:S}
S \ll 1+(\log X)S_1+(\log X)^8S_2,
\end{gather}
where
\begin{align*}
S_1&=\sum_{x < \frac{2X}{Z}} d_3(x) \sum_{y > Z, \frac{X}{x} < y < \frac{2X}{x}} e\left(\frac{\nu}{q^j}(g(xy)+h(xy))\right)\\
S_2&=\sum_{\frac XV<x\leq\frac{2X}U} d_4(x) \sum_{U < y < V, \frac{X}{x} < y \leq \frac{2X}{x}} b(y) e\left(\frac{\nu}{q^j}(g(xy)+h(xy))\right)\notag
\end{align*}

We start with the estimation of $S_1$. Since $d_3(x)\ll
x^{\varepsilon}$ we have for
\begin{align*}
\lvert S_1\rvert\ll X^\varepsilon\sum_{x\leq\frac{2X}Z}
  \lvert\sum_{\substack{\frac Xx<y\frac{2X}x\\y>Z}}e\left(\frac{\nu}{q^j}(g(xy)+h(xy))\right)\rvert.
\end{align*}
For estimating the inner sum we fix $x$ and denote $Y=\frac Xx$. Since
$\theta_r\not\in\Z$ and $\theta_r>k\geq0$, we have that
\[\lvert\frac{\partial^\ell g(xy)}{\partial y^\ell}\rvert
\asymp X^{\theta_r}Y^{-\ell}.\]

Now on the one hand, since $q^j\leq P^{\theta_r-\rho}$, we have $\nu
q^{-j}X^{\theta_r}\gg X^{\rho}$. On the other hand for
$\ell\geq5(\lfloor\theta_r\rfloor+1)$ we get
\[\frac{\nu}{q^j}X^{\theta_r}Y^{-\ell}\leq P^\gamma X^{\theta_r-\frac25\ell}\ll X^{-\frac12}.\] 

Thus an application of Lemma \ref{bkmst:lem25} yields the following
estimate:
\begin{equation}\label{mani:estim:S1}
\begin{split}
\lvert S_1\rvert &\ll X^{\varepsilon}\sum_{x \leq 2X/Z} Y \left[
  Y^{-\frac{1}{K}} + (\log Y)^kX^{-\frac{\rho}{K}} + X^{-\frac{1}{2} \frac{1}{4K \cdot 8L^5 - 2K}} \right] \\
&\ll X^{1+\varepsilon}(\log X)\left(X^{-\rho} + X^{-\frac{1}{64L^5-4} } \right)^{\frac1K}, 
\end{split}\end{equation}
where we have used that $\frac kK<1$ and $\rho<\frac13$.

For the second sum $S_2$ we start by splitting the interval $(
\frac{X}{V} , \frac{2X}{U} ]$ into $\leq \log X$ subintervals of the
form $(X_1, 2X_1]$. Thus
\begin{align*}
\lvert S_2\rvert
&\leq (\log X)X^{\varepsilon}\sum_{X_1<x\leq
  2X_1}\lvert\sum_{\substack{U<y<V\\\frac
    Xx<y\leq\frac{2X}x}}b(y)e\left(\frac{\nu}{q^j}(g(xy)+h(xy))\right)\rvert
\end{align*}

Now an application of Cauchy's inequality together with $\lvert
b(y)\rvert\ll X^\varepsilon$ yields
\begin{align*}
\lvert S_2\rvert^2
&\leq (\log X)^2X^{2\varepsilon}X_1\sum_{X_1<x\leq
  2X_1}\lvert\sum_{\substack{U<y<V\\\frac
    Xx<y\leq\frac{2X}x}}b(y)e\left(\frac{\nu}{q^j}(g(xy)+h(xy))\right)\rvert^2\\
&\ll (\log X)^2X^{4\varepsilon}X_1\\
&\quad\times\left(X_1\frac{X}{X_1}+\lvert \sum_{X_1<x\leq2X_1} \sum_{A < y_1 < y_2 \leq B}e\left(\frac{\nu}{q^j} (g(xy_1)-g(xy_2) + h(xy_1)-h(xy_2))\right)  \rvert\right)
\end{align*}
where $A = \max \{U, \frac{X}{x} \} $ and
$B = \min \{U, \frac{2X}{x} \}$. Changing the order of summation, we
get
\begin{multline*}
|S_2|^2 \ll (\log X)^2X^{4\varepsilon}X_1\\
\times\left(X+
  \sum_{A < y_1 < y_2 \leq B}\lvert \sum_{X_1<x\leq2X_1} e\left(\frac{\nu}{q^j} (g(xy_1)-g(xy_2) + h(xy_1)-h(xy_2))\right)  \rvert\right)
\end{multline*}

As above we want to apply Lemma \ref{bkmst:lem25}. To this end we fix
$y_1$ and $y_2 \ne y_1$. Similarly to above we get that
$$\lvert\frac{\partial^\ell\left(g(xy_1)-g(xy_2)+h(xy_1)-h(xy_2)\right)}{\partial x^\ell}\rvert\asymp\frac{\lvert
  y_1-y_2\rvert}{y_1}X^{\theta_r}X_1^{-\ell}.$$
Now, on the one hand we have $\frac{\nu}{q^j}\frac{\lvert
  y_1-y_2\rvert}{y_1}X^{\theta_r}\gg X^{\rho}$ and on the other hand
\[\frac{\nu}{q^j}\frac{\lvert
  y_1-y_2\rvert}{y_1}X^{\theta_r}X_1^{-\ell}
\ll X^{\gamma+\theta_r}\left(\frac{X}{V}\right)^{-\ell}
\ll X^{\gamma+\theta_r-\frac23\ell}
\ll X^{-\frac12}\]
if $\ell\geq2\lfloor\theta_r\rfloor+3$. Thus again an application of Lemma \ref{bkmst:lem25} yields
\begin{equation}\label{mani:estim:S2}
\begin{split}
\lvert S_2\rvert^2 &\ll (\log X)^2X^{4\varepsilon}X_1\left(X +
  \sum_{A<y_1<y_2\leq B} X_1\left( X_1^{-\frac{1}{K}} + X^{-\frac{\rho}{K}} + X^{-\frac{1}{2} \frac{1}{4K \cdot 2L^2 - 2K}}\right)\right) \\
&\ll (\log X)^2X^{4\varepsilon}\left(X^{\frac53} + X^{2-\frac{\rho}{K}} + X^{2- \frac{1}{16KL^2 - 4K}}\right).
\end{split}
\end{equation}

Plugging the two estimates \eqref{mani:estim:S1} and
\eqref{mani:estim:S2} into \eqref{mani:S} proves the proposition.
\end{proof}

\subsection{The digits in the middle}

Now we are getting more involved in order to consider those $j$
leading to a position between $\theta_r$ and $k$. These sums
correspond to the ``digits in the middle'' in the proof of Theorem
\ref{mani:centralthm}. We want to prove the following
\begin{prop}\label{prop:middle_digits}
  Let $P$ and $\rho$ be positive reals and $f$ be a pseudo-polynomial as in
  \eqref{pseudo:poly:split}. If $2\rho<\theta_r<k$ and $j$ is such that
\begin{gather}\label{mani:middle_range}
  P^{\theta_r-\rho}< q^j\leq P^{k-1+\rho}
\end{gather}
holds, then for $1\leq\nu\leq P^\gamma$ we have
\begin{gather*}
  S(P,j,\nu)=\sum_{p\leq P}e\left(\frac{\nu f(p)}{q^j}\right)\ll P^{1-\frac{\rho}{4^k}}.
\end{gather*}
\end{prop}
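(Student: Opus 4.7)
The plan is to follow exactly the scheme used in the proof of Proposition \ref{prop:least_significant}. First, by Lemma \ref{mr:lem11}, I rewrite $S(P,j,\nu)$ as a sum over integers weighted by $\Lambda(n)$ (up to negligible errors). Then I dyadically restrict to an interval $n\in(X,2X]$ with $X\le P$ and apply Vaughan's identity (Lemma \ref{bkmst:lem23}) with the same parameters $U=\tfrac14 X^{1/5}$, $V=4X^{1/3}$ and $Z$ close to $\tfrac14 X^{2/5}$ to reduce the problem to bounding a Type I bilinear sum $S_1$ and a Type II bilinear sum $S_2$, both involving the phase $\nu q^{-j}(g(xy)+h(xy))$.

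To estimate the inner sums I invoke Lemma \ref{bkmst:lem25}, taking the ``smooth'' function to be $\nu q^{-j}g(xy)$ and the polynomial part to be $\nu q^{-j}h(xy)$, which has degree $k$ in $y$. For $\ell$ larger than $\theta_r$ the estimate $|\partial_y^{\ell}(\nu q^{-j}g(xy))|\asymp \nu q^{-j}X^{\theta_r}Y^{-\ell}$ holds with $Y=X/x$, so I set $F=\nu q^{-j}X^{\theta_r}$. The assumption (\ref{mani:middle_range}) gives $X^{\theta_r-k+1-\rho}\ll F\ll X^{\gamma+\rho}$, and once I choose the iteration depth in Lemma \ref{bkmst:lem25} proportional to $k$ (yielding $K\asymp 2^k$ and $Q\asymp 2^k$), both the smoothness requirement and the constraint $F=o(Y^{q+2})$ are satisfied. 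The third term of the bound in Lemma \ref{bkmst:lem25}, namely $Y(F/Y^{q+2})^{1/(4KQ-2K)}$, then produces the exponent $1/4^k$ appearing in the target bound.

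For $S_2$ I perform the same Cauchy--Schwarz manoeuvre as in the previous proposition: Cauchy--Schwarz in the outer variable $x$ followed by an interchange of summations yields a double sum over $(y_1,y_2)$ of inner sums in $x$ whose phase is the differenced function $\nu q^{-j}(g(xy_1)-g(xy_2)+h(xy_1)-h(xy_2))$. Since $|y_1-y_2|y_1^{-1}X^{\theta_r}X_1^{-\ell}$ governs the derivative of the pseudo-polynomial difference and the differenced polynomial still has degree $k$, a fresh application of Lemma \ref{bkmst:lem25} gives $|S_2|^2\ll X^{2-\rho/4^k+\varepsilon}$. Combining this with the Type I estimate then gives the claimed bound $S(P,j,\nu)\ll P^{1-\rho/4^k}$.

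The main obstacle is the verification that the chosen parameters satisfy the size conditions of Lemma \ref{bkmst:lem25} uniformly across the entire middle range (\ref{mani:middle_range}). At the lower end ($q^j$ near $P^{\theta_r-\rho}$) the pseudo-polynomial part is the dominant source of oscillation and the argument is almost identical to that of Proposition \ref{prop:least_significant}; at the upper end ($q^j$ near $P^{k-1+\rho}$) the derivative size $F$ collapses, and the third term of Lemma \ref{bkmst:lem25}, which encodes the oscillation contributed by the polynomial part $h$, must carry the savings. The restriction $\rho<1/(4(k+1))$ is engineered precisely so that the resulting exponent $\rho/4^k$ emerges uniformly in both regimes.
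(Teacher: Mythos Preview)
Your plan differs from the paper's, and it contains a genuine gap. The problem is the second term in the bound of Lemma~\ref{bkmst:lem25}. That lemma gives
\[
\Bigl|\sum e(g(y)+h(y))\Bigr|\ll Y^{1-1/K}+Y\Bigl(\frac{\log^kY}{F}\Bigr)^{1/K}+Y\Bigl(\frac{F}{Y^{q+2}}\Bigr)^{1/(4KQ-2K)},
\]
and you need \emph{all three} terms to be $o(Y)$ to save anything. With your choice $F=\nu q^{-j}X^{\theta_r}$, the hypothesis \eqref{mani:middle_range} only yields $F\gg X^{\theta_r-k+1-\rho}$, and since $\theta_r<k$ this exponent is negative as soon as $\theta_r<k-1+\rho$; in fact $F$ may be an arbitrarily large negative power of $P$. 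In that regime the middle term $Y(\log^kY/F)^{1/K}$ exceeds $Y$ and the lemma is worse than trivial. Your remark that ``the third term must carry the savings'' at the upper end overlooks that the bound is a sum, not a minimum: the third term cannot rescue a blown-up second term. Intuitively, Lemma~\ref{bkmst:lem25} extracts cancellation from the \emph{smooth} part; once $g$ contributes essentially no oscillation you are left with a pure Weyl sum over a polynomial, and saving there requires Diophantine input on the coefficients, which the lemma does not have.

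The paper proceeds quite differently. After Lemma~\ref{mr:lem11} it cuts the range into \emph{short} intervals $(X,X+H]$ with $H=\min\bigl(P^{1-\theta_r}|\nu|^{-1}q^j,\,X\bigr)$, chosen so that the total variation of $\varphi(n)=e(\nu q^{-j}g(X+n))$ over such an interval is $O(1)$. Abel summation then separates $g$ from $h$, reducing matters to the pure polynomial sum $T(H)=\sum_{X<n\le X+H}\Lambda(n)\,e(\nu q^{-j}h(n))$. This is estimated by Harman's Lemma~\ref{lem:exponential_sum_primes_poly}, together with a Dirichlet-approximation case analysis on the leading coefficient $\nu\alpha_k/q^j$: the constraints $P^{\theta_r-\rho}<q^j\le P^{k-1+\rho}$ and $\rho<\tfrac1{4(k+1)}$ force the approximating denominator $b$ to exceed $H^\rho$, whence Harman's bound yields $T(H)\ll H^{1-\rho/4^{k-1}+\varepsilon}$ and the proposition follows. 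The key ingredient you are missing is precisely this Diophantine control of the polynomial's leading coefficient, which is what supplies cancellation when the pseudo-polynomial part $g$ no longer does.
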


The main idea in this range is to use that the dominant part of $f$
comes from the polynomial $h$. Therefore after getting rid of the
function $g$ we will estimate the sum over the polynomial by the
following
\begin{lem}\label{lem:exponential_sum_primes_poly}
Let $h\in\R[X]$ be a
polynomial of degree $k\geq2$. Suppose $\alpha$ is the leading
coefficient of $h$ and that there are integers $a$, $q$ such that
$$\lvert q\alpha-a\rvert<\frac1q\quad\text{with}\quad
(a,q)=1.$$
Then we have for any $\varepsilon>0$ and $H\leq X$
$$\sum_{X<p\leq X+H}\log(p)e(h(p))\ll
H^{1+\varepsilon}\left(\frac1q+\frac1{H^{\frac12}}+\frac{q}{H^k}\right)^{4^{1-k}}.$$
\end{lem}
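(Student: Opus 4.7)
The plan is to convert the sum over primes to a sum weighted by the von Mangoldt function, decompose it via Vaughan's identity, and reduce the resulting bilinear sums to Weyl sums for the polynomial $h$.

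First, using Chebyshev's estimate in the spirit of \eqref{mani:log_Mangoldt_equivalence}, the contribution of prime powers $p^r$ with $r\geq2$ in the interval $(X,X+H]$ is $O(H^{1/2}\log H)$, so it suffices to bound $\sum_{X<n\leq X+H}\Lambda(n)e(h(n))$. Applying Vaughan's identity (compare Lemma \ref{bkmst:lem23}) with parameters $U$, $V$ of size roughly $H^{1/3}$ decomposes this sum, up to a negligible error, into $O((\log H)^2)$ bilinear pieces of Type I,
\[T_I=\sum_{m\leq M}a_m\sum_{X/m<n\leq(X+H)/m}e(h(mn)),\]
and of Type II,
\[T_{II}=\sum_{M_1<m\leq 2M_1}a_m\sum_{\substack{U<n<V\\X<mn\leq X+H}}b_n\,e(h(mn)),\]
with $\lvert a_m\rvert, \lvert b_n\rvert\ll(mn)^\varepsilon$.

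For $T_I$ the inner sum is a Weyl sum for the polynomial $h(mn)$ in $n$, of degree $k$ with leading coefficient $\alpha m^k$; from the hypothesis $\lvert q\alpha-a\rvert<1/q$ one extracts, for each $m$, a rational approximation $a'/q'$ to $\alpha m^k$ with $(a',q')=1$, and then the classical Weyl inequality followed by summation in $m$ yields a contribution of the stated shape. For $T_{II}$ I would apply the Cauchy-Schwarz inequality in the $m$-variable, expand the square, and swap the order of summation to arrive at
\[\lvert T_{II}\rvert^2\ll H^{1+\varepsilon}\left(H+\sum_{n_1\neq n_2}\left\lvert\sum_m e\bigl(h(mn_1)-h(mn_2)\bigr)\right\rvert\right).\]
For each pair $n_1\neq n_2$ the inner phase is a polynomial in $m$ of degree $k$ with leading coefficient $\alpha(n_1^k-n_2^k)$, to which Weyl's inequality again applies; summing over the pairs and taking square roots gives a bound of the required form. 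The squaring step from Cauchy-Schwarz is precisely what turns the Weyl exponent $2^{1-k}$ into $4^{1-k}$, and the term $H^{-1}$ from Weyl into $H^{-1/2}$ in the final bound.

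The principal difficulty lies in the Type II step: the rational approximation of $\alpha(n_1^k-n_2^k)$ inherited from that of $\alpha$ can degrade substantially for certain pairs $(n_1,n_2)$, and one must show that those pairs whose leading coefficient admits a much smaller denominator than expected contribute only an acceptable amount. This is handled by a standard counting argument bounding the number of pairs $(n_1,n_2)\in(U,V]^2$ for which $\alpha(n_1^k-n_2^k)$ lies within $1/Q$ of a rational with denominator at most $Q$. Once this is settled, a balancing choice of $U$ and $V$ in Vaughan's identity equalizes the Type I and Type II contributions and yields the claimed estimate.
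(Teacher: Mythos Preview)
The paper does not actually prove this lemma: its entire proof is a one-line citation to Theorem~1 of Harman \cite{harman1981:trigonometric_sums_over}, with the remark that one sums over an interval $(X,X+H]$ rather than $(0,X]$. Your proposal instead sketches the underlying argument---Vaughan's identity, Weyl's inequality on the Type~I sums, Cauchy--Schwarz plus Weyl on the Type~II sums, and a counting argument for the pairs $(n_1,n_2)$ whose leading coefficient $\alpha(n_1^k-n_2^k)$ has an anomalously small approximating denominator---and this is essentially Harman's own method. So your route is sound and indeed reproduces what the cited reference does; it is simply far more than the paper itself supplies. If the aim is to match the paper, a citation suffices; if the aim is a self-contained proof, your outline is on the right track, though the counting of bad pairs and the precise balancing of the Vaughan parameters would still need to be written out in full to make the exponent $4^{1-k}$ and the term $H^{-1/2}$ come out exactly as stated.
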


\begin{proof}
  This is a slight variant of \cite[Theorem
  1]{harman1981:trigonometric_sums_over}, where we sum over an
  interval of the form $]X,X+H]$ instead of one of the form
  $]0,X]$.
\end{proof}

Now we have enough tools to state the 
\begin{proof}[Proof of Proposition \ref{prop:middle_digits}]
  As in the Proof of Proposition \ref{prop:least_significant} we start
  by an application of Lemma~\ref{mr:lem11} yielding
\[S(P,j,\nu)\ll\frac1{\log P}\max\lvert\sum_{n\leq
  P}\Lambda(n)e\left(\frac{\nu}{q^j}(g(n)+h(n))\right)\rvert+P^{\frac12}.\]

We split the inner sum into $\leq \log P$ sub sums of the form
\[S:=\sum_{X<n\leq X+H}\Lambda(n)e\left(\frac{\nu}{q^j}(g(n)+h(n))\right)\]
with $P^{1-2\rho}\leq X\leq P$ and \[H=\min\left(P^{1-\theta_r}\lvert\nu\rvert^{-1}q^j,X\right).\]
Now we want to separate the function parts $g$ and $h$. Therefore we define two
functions $T$ and $\varphi$ by
$$T(x)=\sum_{X< n\leq
  X+x}\Lambda(n)e\left(\frac{\nu}{q^j}h(n)\right)
\quad\text{and}\quad
\varphi(x):=e\left(\frac{\nu}{q^j}g\left(X+x\right)\right)$$
Then an application of summation by parts yields
\begin{equation}\label{mani:eq_1}
\begin{split}
\sum_{X< n\leq X+H}\Lambda(n)
e\left(\frac{\nu}{q^j}(g(n)+h(n))\right)
&=\sum_{n=1}^{H} \varphi(n)(T(n)-T(n-1))\\
&=\sum_{n=1}^{H}T(n)\left(\varphi(n)-\varphi(n+1)\right)+\varphi(H-1)T(H)\\
&\ll\lvert T(H)\rvert+\sum_{n=1}^{H-1}\left|\varphi(n)-\varphi(n+1)\right|\lvert T(n)\rvert
\end{split}
\end{equation}

Let $\alpha_k$ be the leading coefficient of $P$. Then by Diophantine
approximation there always exists a rational $a/b$ with $b>0$,
$(a,b)=1$,
\[1\leq b\leq H^{k-\rho}
\quad\text{and}\quad
\lvert\frac{\nu\alpha_k}{q^j}-\frac ab\rvert\leq
\frac{H^{\rho-k}}{b}.\]
We distinguish three cases according to the size of $b$.
\begin{itemize}
\item[] \textbf{Case 1.} $H^\rho<b$. In this case we may apply Lemma
  \ref{lem:exponential_sum_primes_poly} together with
  \eqref{mani:log_Mangoldt_equivalence} to get $$T(h)\ll
  H^{1-\frac{\rho}{4^{k-1}}+\varepsilon}.$$
\item[] \textbf{Case 2.} $2\leq b<H^\rho$. In this case we get
  that $$\lvert\frac{\nu\alpha_k}{q^j}\rvert\geq\lvert\frac
  ab\rvert-\frac1{b^2}\geq\frac1{2b}\geq\frac12H^{-\rho}\geq\frac12P^{-\rho}.$$
  Since $2\rho<\theta_r$, this contradicts our lower bound $q^j\geq P^{\theta_r-\rho}$.
\item[] \textbf{Case 3.} $b=1$. This case requires a further
  distinction according to whether $a=0$ or not.
  \begin{itemize}
  \item[] \textbf{Case 3.1.}
    $\lvert\frac{\nu\alpha_k}{q^j}\rvert\geq\frac12$. It follows
    that $$q^j\leq2\lvert\nu\alpha_k\rvert$$ again contradicting our lower
    bound $q^j\geq P^{\theta_r-\rho}$.
  \item[] \textbf{Case 3.2.}
    $\lvert\frac{\nu\alpha_k}{q^j}\rvert<\frac12$. This implies that
    $a=0$ which yields 
    \begin{gather}\label{case3.2}
    q^j\geq\lvert\nu\alpha_k\rvert H^{k-\rho}.
    \end{gather}
    We distinguish two further cases according to
    whether $P^{1-\theta_r}\lvert\nu\rvert^{-1}q^j\leq X$ or not.
    \begin{itemize}
    \item[] \textbf{Case 3.2.1}
      $P^{1-\theta_r}\lvert\nu\rvert^{-1}q^j\leq X$. This implies that
      $q^j\leq P^{\theta_r}\lvert\nu\rvert$
      and \[H=P^{1-\theta_r}\lvert\nu\rvert^{-1}q^j\geq
      P^{1-\rho}\lvert\nu\rvert^{-1}\geq P^{1-2\rho}.\] Plugging these
      estimates into \eqref{case3.2} gives \[P^{\theta_r}\geq
      \lvert\alpha_k\rvert P^{(1-2\rho)(k-\rho)}.\] However, since
      $4(k+1)\rho<1$, we
      have \[(1-2\rho)(k-\rho)>k-1+2\rho\geq\theta_r\] yielding a
      contradiction.
    \item[] \textbf{Case 3.2.2}
      $P^{1-\theta_r}\lvert\nu\rvert^{-1}q^j>X$. Then $H=X\geq
      P^{1-2\rho}$ and \eqref{case3.2}
      becomes \[P^{k-1+\rho}\geq\lvert\nu\alpha_k\rvert
      P^{(1-2\rho)(k-\rho)}\] yielding a similar contradiction as in
      \textbf{Case 3.2.1}.
    \end{itemize}
  \end{itemize}
\end{itemize}
Therefore \textbf{Case 1} is the only possible and we may always apply
Lemma \ref{lem:exponential_sum_primes_poly} together with
\eqref{mani:log_Mangoldt_equivalence}. Plugging this
into~\eqref{mani:eq_1} yields
\begin{align*}
\sum_{X< n\leq X+H}\Lambda(n)
e\left(\frac{\nu}{q^j}(g(n)+h(n))\right)
&\ll H^{1-\frac{\rho}{4^{k-1}}+\varepsilon}\left(1+\sum_{X< n\leq X+H}\left|\varphi(n)-\varphi(n+1)\right|\right)
\end{align*}

Now by our choice of $H$ together with an application of the mean
value theorem we have that
$$\sum_{X\leq n\leq X+H}\lvert \varphi(n)-\varphi(n+1)\rvert
\ll H\frac{\nu}{q^j}P^{\theta-1}\ll 1.$$
Thus 
\begin{align*}
\sum_{X\leq n\leq X+H} \Lambda(n) 
e\left(\frac{\nu}{q^j}(g(n)+h(n))\right)
\ll H^{1-\frac{\rho}{4^{k-1}}+\varepsilon}.
\end{align*}

\end{proof}

\section{Proof of Theorem \ref{mani:centralthm}, Part II}\label{sec:proof-prop-refm2}
Now we use all the tools from the section above in order to estimate

\begin{gather}\label{distance_from_mean}
\sum_{j=\ell}^J\lvert\sum_{p\leq P}\mathcal{I}(q^{-j}f(p))-\frac{\pi(P)}{q^{\ell}}\rvert
\ll\pi(P)H^{-1}J+\sum_{\nu=1}^{H}
\nu^{-1}\sum_{j=\ell}^JS(P,j,\nu).
\end{gather}

As indicated in the section above, we split the sum over $j$ into two
or three parts according to whether $\theta_r>k$ or not. In any case
an application of Proposition \ref{prop:least_significant} yields for the
least significant digits that
\begin{gather}\label{estimate:least}
\sum_{1\leq \nu\leq P^\gamma}\nu^{-1}\sum_{1\leq q^{j}\leq
  P^{\theta_r-\rho}} S(P,j,\nu)
\ll (\log P)^9JP^{1-\eta}.
\end{gather}

Now let us suppose that $\theta_r>k$. Then an application of Proposition
\ref{prop:most_significant} yields
\begin{equation}\label{estimate:most_non_integer}
\begin{split}
\sum_{1\leq \nu\leq P^\gamma}\nu^{-1}&\sum_{P^{\theta_r-\rho}< q^{j}\leq
  P^{\theta_r}}S(P,j,\nu)\\
&\ll \sum_{1\leq \nu\leq P^\gamma}\nu^{-1}\sum_{P^{\theta_r-\rho}< q^{j}\leq
  P^{\theta_r}}\frac1{\log
  P}\left(\frac{\nu}{q^j}\right)^{-\frac1{\lf\theta_r\rf}}+\frac{P}{(\log P)^{G-2}}\\
&\ll \frac{P}{\log P}.
\end{split}
\end{equation}

Plugging the estimates \eqref{estimate:least} and
\eqref{estimate:most_non_integer} into~\eqref{distance_from_mean} we
get that
$$\sum_{j=\ell}^J\lvert\sum_{p\leq P}\mathcal{I}(q^{-j}f(p))-\frac{\pi(P)}{q^{\ell}}\rvert
\ll\frac{P}{\log P},$$
which together with \eqref{mani:NthetatoNstar} proves Theorem
\ref{mani:centralthm} in the case that $\theta_r>k$.

On the other side if $\theta_r<k$, then we consider the two ranges
$$P^{\theta_r-\rho}<q^j\leq P^{k-1+\rho}
\quad\text{and}\quad
P^{k-1+\rho}<q^j\leq P^k.$$
For the ``digits in the middle'' an application of Proposition
\ref{prop:middle_digits} yields
\begin{equation}\label{estimate:middle}
\begin{split}
\sum_{1\leq \nu\leq P^\gamma}\nu^{-1}\sum_{P^{\theta_r-\rho}< q^{j}\leq
  P^{k-1+\rho}}S(P,j,\nu)
&\ll \sum_{1\leq \nu\leq P^\gamma}\nu^{-1}\sum_{P^{\theta_r-\rho}< q^{j}\leq
  P^{k-1+\rho}}P^{1-\frac{\rho}{4^k}}\\
&\ll(\log P)JP^{1-\frac{\rho}{4^k}}.
\end{split}
\end{equation}

Finally we consider the most significant digits. By an application of
Proposition~\ref{prop:most_significant} we have
\begin{equation}\label{estimate:most_integer}
\begin{split}
\sum_{1\leq \nu\leq P^\gamma}\nu^{-1}&\sum_{P^{k-1+\rho}< q^{j}\leq
  P^{k}}S(P,j,\nu)\\
&\ll \sum_{1\leq \nu\leq P^\gamma}\nu^{-1}\sum_{P^{k-1+\rho}< q^{j}\leq
  P^{k}}\frac1{\log
  P}\left(\frac{\nu}{q^j}\right)^{-\frac1k}+\frac{P}{(\log P)^{G-2}}\\
&\ll \frac{P}{\log P}.
\end{split}
\end{equation}

Plugging the estimates \eqref{estimate:least}, \eqref{estimate:middle} and
\eqref{estimate:most_integer} into~\eqref{distance_from_mean} we
get that
$$\sum_{j=\ell}^J\lvert\sum_{p\leq P}\mathcal{I}(q^{-j}f(p))-\frac{\pi(P)}{q^{\ell}}\rvert
\ll\frac{P}{\log P},$$
which together with \eqref{mani:NthetatoNstar} proves Theorem
\ref{mani:centralthm} in the case that $\theta_r<k$.

\section*{Acknowledgment}
The author wants to thank G{\'e}rald Tenenbaum for many fruitful
discussions and suggestions in connection with the proof of
Proposition \ref{prop:middle_digits}.


\providecommand{\bysame}{\leavevmode\hbox to3em{\hrulefill}\thinspace}
\providecommand{\MR}{\relax\ifhmode\unskip\space\fi MR }
\providecommand{\MRhref}[2]{%
  \href{http://www.ams.org/mathscinet-getitem?mr=#1}{#2}
}
\providecommand{\href}[2]{#2}

\end{document}